\newtheorem{theorem}{Theorem}
\newtheorem{lemma}[theorem]{Lemma}
\def\R#1{(\ref{#1})}
\newcommand{\abs}[1]{\left|#1\right|}
\def\D{\,\mathrm{d}}
\DeclareMathOperator*{\argmin}{argmin}
\DeclareMathOperator*{\argmax}{argmax}
\title{
\vspace{-2em}
High-resolution signal recovery via generalized sampling and functional principal component analysis 
} 
\author{Milana Gataric\footnote{Statistical Laboratory, 
Department for Pure Mathematics and Mathematical Statistics,
 University of Cambridge, Cambridge, UK; e-mail: m.gataric{@}statslab.cam.ac.uk; ORCHID: \href{https://orcid.org/0000-0003-3915-2266}{0000-0003-3915-2266}}
}
\begin{document}

\maketitle

{\abstract{In this paper, we introduce a computational framework for recovering a high-resolution approximation of an unknown function from its low-resolution indirect measurements as well as high-resolution training observations by merging the frameworks of generalized sampling and functional principal component analysis. In particular, we increase the signal resolution via a data driven approach, which models the function of interest as a realization of a random field and leverages a training set of observations generated via the same underlying random process. We study the performance of the resulting estimation procedure and show that high-resolution recovery is indeed possible provided appropriate low-rank and angle conditions hold and provided the training set is sufficiently large relative to the desired resolution. Moreover, we show that the size of the training set can be reduced by leveraging sparse representations of the functional principal components. Furthermore, the effectiveness of the proposed reconstruction procedure is illustrated by various numerical examples.
}}

\vspace{0.5em}

{\noindent  \small \textbf{Keywords:} 
High-dimensional reconstructions, 
Sparse PCA,
Wavelet reconstructions, 
Fourier sampling, 
Data-driven inverse problems,
Low-rank recovery models,  
Super-resolution}

\section{Introduction}

Let $\mathcal{L}_2(D;\mathbb{C}):=\{f:D\mapsto \mathbb{C} : \int_D \abs{f(u)}^2 \D u < \infty \}$ be the space of square-integrable complex-valued functions supported on a compact domain $D\subseteq\mathbb{R}^d$,  with the standard inner product and norm denoted by $\langle\cdot,\cdot\rangle$ and $\|\cdot\|$, respectively.  
Let $f\in\mathcal{L}_2(D;\mathbb{C})$ be a realization of a $\mathcal{L}_2(D;\mathbb{C})$-valued random field $F$ with a probability measure $P$. 
In this paper, we consider the problem of recovering a high-resolution approximation of signal $f$ with respect to the first $p\in\mathbb{N}$ elements of an orthonormal basis $\{\varphi_{\ell}\}_{\ell\in\mathbb{N}}$ in $\mathcal{L}_2(D;\mathbb{C})$ (e.g.~a wavelet basis), from two combined sets of measurements:
\begin{itemize}
\item[(i)] noisy low-resolution measurements of $f$ with respect to the first $q\in\mathbb{N}$ elements of another potentially different Riesz basis $\{\psi_k\}_{k\in\mathbb{N}}$ in $\mathcal{L}_2(D;\mathbb{C})$ (e.g.~a Fourier basis), namely
\begin{equation}\label{eq:observations_Fourier}
\langle f,\psi_k \rangle + w_k, \quad k=1,\ldots,q,
\end{equation}
where  the highest sampled frequency (i.e.~sampling bandwidth) $q$ is relatively small compared to the desired resolution $p$ and $w_k\in\mathbb{C}$ is a realization of a Gaussian noise, as well as
\item[(ii)] noisy high-resolution measurements of a realization $f_1,\ldots, f_n$ of a random sample $F_1,\ldots,F_n$ from the probability measure $P$,  namely training observations that consist of
\begin{equation}\label{eq:observations_coef}
\langle f_i, \varphi_{\ell} \rangle + z_{i\ell}, \quad  \ell=1,\ldots,p, \quad \ i=1,\ldots,n,
\end{equation}
where $z_{i\ell}\in\mathbb{C}$ is a realization of a Gaussian noise.
\end{itemize}

Specifically, we want to recover $f$ in a high-resolution subspace $\mathcal{G}_p:=\text{span}\{\varphi_1,\ldots,\varphi_p\}  \subseteq \mathcal{L}_2(D;\mathbb{C})$, so that its reconstruction achieves the high-resolution approximation rate of $\|f-Q_{\mathcal{G}_p}f\|$, where $Q_{\mathcal{G}_p}f:=\sum_{\ell=1}^p \langle f, \varphi_{\ell} \rangle \varphi_{\ell}$ is the orthogonal projection of $f$ onto $\mathcal{G}_p$ and thus the best possible approximation of $f$ in $\mathcal{G}_p$. It is important to note that normally, such high-resolution rate of approximation cannot be achieved solely from the low-resolution measurements \R{eq:observations_Fourier} of $f$ and typically requires increasing the highest sampled frequency $q=q(p)$ relative to the desired resolution $p$. In this paper, we keep $q$ independent of $p$ and instead increase the size of the training set $n=n(p)$ relative to $p$, thereby leveraging the implicit statistical information given through the high-resolution training observations \R{eq:observations_coef}.
 
To recover $f$  from its low-resolution indirect samples \R{eq:observations_Fourier}, so that the corresponding reconstruction may achieve the high-resolution rate associated with $\mathcal{G}_p$, in this paper, we propose to compute the coefficients of $f$ with respect to the functional principal components constructed from the high-resolution training data \R{eq:observations_coef}. Specifically,  we recover $f$ in a reconstruction subspace $\hat{\mathcal{E}}_m^p\subseteq \mathcal{G}_p$, which is constructed from the first $m$ $p$-dimensional (sparse) eigenvectors of the sample covariance matrix associated with observations \R{eq:observations_coef}, and which estimates the subspace $\mathcal{E}_m:=\text{span}\{\phi_1,\ldots,\phi_m\}\subseteq \mathcal{L}_2(D;\mathbb{C})$ spanned by the first $m$ eigenfunctions ordered by the magnitude of the corresponding eigenvalues, $\lambda_1\geq\lambda_2\geq\cdots$, of the covariance operator associated with the probability measure $P$.

Furthermore, we investigate the conditions 
under which a stable high-resolution reconstruction can be guaranteed for any realization of $F$ and $F_1,\ldots,F_n$. In particular, we show that, in the case of a Gaussian measure $P$ and Gaussian noise, if $m$ and $q=q(m)$ are such that the distance between the subspaces $\mathcal{E}_m$ and $\mathcal{F}_q:=\text{span}\{\psi_1,\ldots,\psi_q\}\subseteq \mathcal{L}_2(D;\mathbb{C})$ is not too large, then the corresponding estimator of $F$ 
is consistent as $m,q/m,p/m,n/(pm)\rightarrow\infty$. Moreover, if $q=q(m)$  and $n=n(p)$ are sufficiently large, the rate of estimation corresponds to the maximum of the two terms, $\mathbb{E}\|F-Q_{\mathcal{E}_m}F\|$ and $\max_{j=1,\ldots,m}\|\phi_j-Q_{\mathcal{G}_p}\phi_j\|$, implying that, if $P$ is a low rank measure so that $\sum_{j=1}^m\lambda_j$ is sufficiently small, then we can achieve the same rate of estimation as the best possible approximation rate in $\mathcal{G}_p$.  Thus, our reconstruction from the low-resolution measurements in $\mathcal{F}_q$ can achieve the high-resolution associated with $\mathcal{G}_p$ as $p$ increases,  only at the price of increasing the size of the training set $n$. 

\subsection{Motivation and relation to previous work}


Reconstructing a function $f$ from the linear functionals~\eqref{eq:observations_Fourier} is an important problem in mathematical signal processing dating back to Shannon~\cite{Shannon1948}, which regained an increased interest over the past decades leading to a boom of areas such as compressed sensing~\cite{Candes2006, Donoho2006} and super-resolution~\cite{Blu2008, Candes2014}. In signal and image processing applications, $f$ represents an unknown audio signal or an image that needs to be recovered from a small amount of its fixed indirect measurements given by a sensing device. For instance, if the measurements are taken with respect to Fourier exponentials, then such problem arises in medical imaging, such as magnetic resonance imaging (MRI), as well as in radar and geophysical imaging; whereas, if the sampling system is a pixel basis (the basis induced by the scaling function of Haar wavelets), then such scenario arises in lens-less optical imaging for example. 



To address such problem, building upon the previous works of \cite{Unser1994, Eldar2003, Grochening2010}, \cite{AH2012,  AHP2013} introduced a computational framework known as Generalized Sampling (GS) that recovers an approximation of an element $f$ of a separable Hilbert space $\mathcal{H}$ with respect to any desired reconstruction basis (or more generally, a frame) in $\mathcal{H}$, from its finitely many functional measurements taken with respect to any other basis in $\mathcal{H}$, such as those given in~\eqref{eq:observations_Fourier}. GS guarantees a noise-robust  reconstruction, which attains the best possible approximation rate in the reconstruction space $\mathcal{G}_p$, provided the distance between the sampling space $\mathcal{F}_q$ and the reconstruction space $\mathcal{G}_p$ is not too large. Such reconstruction was then analyzed for different choices of sampling and reconstruction  spaces, see e.g.~\cite{AHP2014,AGH2014,Ma2015,AGR2019}. The GS framework has also been combined with $\ell_1$-regularization yielding insights into so-called infinite-dimensional compressed sensing \cite{AH2016,AHPR2017}. The results therein established that, if $f$ is sparse with respect to $\mathcal{G}_p$, then by means of $\ell_1$-regularization one still may stably reconstruct $f$ in $\mathcal{G}_p$ even if only randomly sub-sampling in $\mathcal{F}_q$. However, even though by random sub-sampling in $\mathcal{F}_q$ the total number of samples can be substantially reduced, the condition on not too large distance between the spaces $\mathcal{F}_q$ and $\mathcal{G}_p$ remains, meaning that the highest sampled  frequency $q$ has to be large relative to the desired resolution $p$.  In applications such as MRI for example, this may present a time-consuming constraint since, (especially) when under-sampling, high frequencies in the Fourier domain need to be acquired. Also, in applications where fast calibration of an imaging device with respect to non-orthogonal bases is crucial for a real-time operation, such as optical endoscopy for example, time-consuming calibration is typically needed for high-resolution image recovery \cite{Gataric2019}. 

Unlike these previous works, in the present paper, instead of reconstructing a generic deterministic $\mathcal{L}_2$-function,  we model the signal of interest as an observation from a $\mathcal{L}_2$-valued random field with a probability measure $P$ whose structure can be learned  through a training set. Therefore, we can adapt our sampling scheme more closely to the object being sampled, namely to the specific probability measure at hand, and thereby possibly reduce the highest frequency $q$ required for the  high-resolution recovery in $\mathcal{G}_p$. In particular, the reconstruction procedure proposed in this paper, which we call GS-FPCA, combines the aforementioned GS framework with the data-driven approach of Functional Principal Component Analysis (FPCA) from functional data analysis, see e.g.~\cite{RS2005,Hall2006}. By means of FPCA, we construct a suitable reconstruction subspace in $\mathcal{G}_p$ from the training observations \eqref{eq:observations_coef}, thereby circumventing the requirement on the distance between subspaces $\mathcal{F}_q$ and $\mathcal{G}_p$, which is replaced by a condition on the distance between $\mathcal{F}_q$ and the space $\mathcal{E}_m$ spanned by the first $m$ eigenfunctions of the underlying probability measure $P$. Therefore, we can ensure a stable high-resolution reconstruction in $\mathcal{G}_p$ provided the angle between the spaces $\mathcal{F}_q$ and $\mathcal{E}_m$ is positive, even in scenarios when the angle between the spaces $\mathcal{F}_q$ and $\mathcal{G}_p$ is zero.

Previous proposals to use a PCA-regularized reconstruction for increasing image resolution most notably appear
within the problem of face hallucination, the term first coined in the seminal work of \cite{BK2000} in the field of computer vision.  In particular, \cite{Zisserman2001} suggest super-resolving a face image by transferring it from a pixel to an eigenface-domain constructed via a training set of high-resolution images, which was then combined with a face recognition task in \cite{Gunturk2003}. Such technique is also used as the initial step in two-stage super-resolution algorithms that combine a global PCA model with a local patch model, see for example \cite{LSF2007,Ma2010}.
These earlier works operate within a finite-dimensional setting, which could be deduced from the infinite-dimensional model of this paper by constraining $\mathcal{G}_p$ to the $p$-dimensional pixel basis and defining the sampling space as the $q$-dimensional pixel basis, i.e.~$\mathcal{F}_q:=\mathcal{G}_q$, $q<p$. 
In contrast, in this paper  we consider a more general infinite-dimensional framework for computing a stable high-resolution approximation of an unknown object of potentially infinite resolution, which is sampled via a flexible measurement model with respect to any (non-orthogonal) basis, making it applicable to a wider range of practical scenarios. 
Our framework also allows for recovery of sparse representations of the unknown object with respect to different bases, such as wavelets for example, thereby potentially decreasing the required size of the training set. Furthermore, as a result of deploying an infinite-dimensional framework, we provide insights into the conditions on the problem parameters under which it is possible to guarantee that such a procedure succeeds in  high-resolution recovery.

Another notable example of leveraging low-rank structure of the underlying signal being recovered 
appears in acceleration schemes for dynamic MRI 
\cite{Lingala2011,Zhao2012}, 
and more recently for functional MRI \cite{Chiew2016} and  MR fingerprinting \cite{Zhao2018}. There,
typically, a sequence of images over time is reconstructed with respect to the principal components (PCs) estimated from training images with low spatial resolution and high temporal sampling rate. A crucial difference from the approach presented here is that,
instead of increasing the temporal resolution, we are interested in increasing the spatial resolution, and therefore,
we consider PCs estimated from training observations with a high spatial resolution
so that  subsequently we can allow for a high-resolution image recovery from its low-resolution (Fourier) measurements.

The remainder of the paper is organized as follows. Since in this work we leverage GS and FPCA, we dedicate Sections~\ref{sec:gs} and \ref{sec:fpca} to review the main concepts from these frameworks, where in Sections \ref{sub:gs-random} and \ref{sub:fpca-empirical}, we derive additional results used later on. In Section~\ref{sec:gs-fpca}, the proposed GS-FPCA reconstruction method is formulated and its theoretical performance is analyzed with respect to different problem parameters. Additionally, in Sections~\ref{sub:spca} and \ref{sub:rr} we describe variants of GS-FPCA that arise due to the regularization techniques of sparse PCA and ridge regression. In Section~\ref{sec:sim}, the empirical performance of GS-FPCA is investigated in different simulation scenarios. Specifically, in Section~\ref{sub:1D} we use a 1D generative model, while in Section~\ref{sub:2D}, we use 2D brain-phantom images. In Section~\ref{sec:conclusions}, we conclude with discussions and future work.  

A summary of notation used throughout the paper is provided in Table~\ref{Tab:Notation}.

{
\begin{table}[H]
\begin{center}
\begin{tabular}{c|ccl}
Symbol & $\subseteq$ & Basis  & Description \\ \hline
$\mathcal{F}_q$ & $\mathcal{L}_2(D;\mathbb{C})$ & $\{\psi_k\}_{k=1}^q$ &  Low-resolution space where unknown $f$ is sampled \\
$\mathcal{G}_p$ & $\mathcal{L}_2(D;\mathbb{C})$ & $\{\varphi_{\ell}\}_{\ell=1}^p$ &  High-resolution space where training set $\{f_i\}_{i=1}^{n}$ is sampled \\
$\mathcal{E}_m$ & $\mathcal{L}_2(D;\mathbb{C})$ & $\{\phi_j\}_{j=1}^m$ &  Principal eigenspace associated with probability measure $P$ \\
$\hat{\mathcal{E}}_m^p$ & $\mathcal{G}_p$ & $\{\hat\phi_j\}_{j=1}^m$ & 
Reconstruction space for $f$ computed from $\{f_i\}_{i=1}^{n}$ \\ \hline
\end{tabular}
\caption{Notation of different subspaces.}
\label{Tab:Notation}
\end{center}
\end{table}
}




\section{Generalized Sampling (GS)}\label{sec:gs}

Given measurements $\{\langle f,\psi_k \rangle\}_{k=1}^q$ of an unknown function $f\in\mathcal{L}_2(D;\mathbb{C})$ with respect to the first $q$ elements of a basis  $\{\psi_k\}_{k\in\mathbb{N}}$ in $\mathcal{L}_2(D;\mathbb{C})$, GS recovers $f$ with respect to the first $p$ elements of any desired, potentially different basis $\{\varphi_{\ell}\}_{\ell\in\mathbb{N}}$ in $\mathcal{L}_2(D;\mathbb{C})$. Specifically, if $\mathcal{G}_p:=\text{span}\{\varphi_1,\ldots,\varphi_p\}\subseteq\mathcal{L}_2(D;\mathbb{C})$ denotes the desired reconstruction space and $\mathcal{F}_q:=\text{span}\{\psi_1,\ldots,\psi_q\}\subseteq\mathcal{L}_2(D;\mathbb{C})$ denotes the given sampling space, and if  $Q_{\mathcal{G}_p}f$ and  $Q_{\mathcal{F}_q}f$ denote the orthogonal projections of $f$ to the respective subspaces, then the GS reconstruction 
\begin{equation}\label{eq:GSorig}
\tilde f_{\mathrm{GS}} := \sum_{\ell=1}^p \tilde{a}_{\ell}  \varphi_{\ell}\in\mathcal{G}_p, 
\end{equation} 
is defined so that it satisfies condition 
$\langle Q_{\mathcal{F}_q} \tilde f_{\mathrm{GS}}, \varphi_{\ell}  \rangle = \langle Q_{\mathcal{F}_q} f,  \varphi_{\ell} \rangle$, $\ell=1,\ldots,p$. Equivalently, the coefficients $\{\tilde{a}_{\ell}\}_{\ell=1}^{p}$ of the GS reconstruction $\tilde f_{\mathrm{GS}}$ correspond to the least-square solution of the linear system
\begin{equation}\label{eq:GSsystem}
\begin{pmatrix} \langle \varphi_1, \psi_1 \rangle & \cdots & \langle \varphi_p, \psi_1 \rangle  \\ \vdots & & \vdots \\  \langle \varphi_1, \psi_q \rangle & \cdots & \langle \varphi_p, \psi_q \rangle  \end{pmatrix} \begin{pmatrix} a_1\\ \vdots\\ a_p \end{pmatrix} = \begin{pmatrix} \langle f,\psi_1 \rangle \\ \vdots \\ \langle f,\psi_q \rangle \end{pmatrix},
\end{equation}
i.e. they can be computed as 
$
\argmin_{\{a_j\}_{j=1}^p\subseteq\mathbb{C}} \sum_{k=1}^q \bigl| \langle f,\psi_k \rangle - \sum_{\ell=1}^p a_{\ell} \langle\varphi_{\ell},\psi_k\rangle \bigr|^2.
$
By the results of \cite{AHP2013} 
we know that,
if 
$$
\cos\angle(\mathcal{G}_p,\mathcal{F}_q):=\inf_{\{g\in\mathcal{G}_p: \|g\|=1\}} \|Q_{\mathcal{F}_q}g\| 
>0,
$$
then
for any $f\in\mathcal{L}_2(D;\mathbb{C})$ there exists a unique reconstruction $\tilde f_{\mathrm{GS}}$, which satisfies the sharp bound
\begin{equation}\label{eq:GSbound}
\|\tilde f_{\mathrm{GS}} - f \| \leq \sec\angle(\mathcal{G}_p,\mathcal{F}_q) \|Q_{\mathcal{G}_p}f - f\|.
\end{equation}
Moreover,  for any fixed $p$ and arbitrarily small $\epsilon>0$, the angle condition $\cos\angle(\mathcal{G}_p,\mathcal{F}_q)\geq \epsilon$ is satisfied for any sufficiently large $q=q(p,\epsilon)$, and thus 
 $\tilde{f}_{\mathrm{GS}}$ achieves the best possible approximation rate in $\mathcal{G}_p$ up to a constant.  
Also, the condition number of such reconstruction, which is defined to indicate reconstruction stability to measurement perturbations $\langle f+g,\psi_k \rangle$, $g\in \mathcal{L}_2(D;\mathbb{C})$, is  
proportional to $\sec\angle(\mathcal{G}_p,\mathcal{F}_q)$. 
The work of \cite{AHP2013} further shows that, if $\{\psi_k\}_{k\in\mathbb{N}}$ is a Riesz basis with Riesz constants $r_1,r_2>0$ such that
\begin{equation}\label{eq:rieszbounds}
r_1 \|b\|_{\ell_2} \leq \Bigl\|\sum_{k\in\mathbb{N}} b_k \psi_k \Bigr\|  \leq r_2 \|b\|_{\ell_2}, \quad \forall b=\{b_k\}_{k\in\mathbb{N}} \in \ell_2(\mathbb{N}),
\end{equation}
and $\{\varphi_{\ell}\}_{\ell\in\mathbb{N}}$ is an orthonormal basis, then $\sec\angle(\mathcal{G}_p,\mathcal{F}_q) \leq \sqrt{r_2}/\sigma_{\min}(A_{p,q}) \leq \sqrt{r_2/r_1} \sec\angle(\mathcal{G}_p,\mathcal{F}_q)$, where $\sigma_{\min}(A_{p,q})$  denotes  the minimal singular value of the system matrix $A_{p,q}$ in \eqref{eq:GSsystem}, namely $\sigma_{\min}(A_{p,q}):=\lambda_{\min}(A_{p,q}^*A_{p,q})^{1/2}$, where $\lambda_{\min}$ is the minimal eigenvalue and $A_{p,q}^*$ is the adjoint of $A_{p,q}$. Note that $r_1=r_2=1$ when $\{\psi_k\}_{k\in\mathbb{N}}$ is an orthonormal basis.

We remark that, alternatively, the angle condition can be interpreted so that for any fixed $q$ and $\epsilon$, resolution $p=p(q,\epsilon)$ needs to be sufficiently small. As we decrease the number of measurements $q$  we also need to decrease resolution $p$ so that the angle condition is satisfied, but the rate at which this happens depends on the specific choices of spaces $\mathcal{G}_p$ and $\mathcal{F}_q$, and has been analyzed in a variety of settings, see e.g.~\cite{AHP2014,AGH2014,AGR2019}. In particular, if  $\mathcal{F}_q$ is spanned by a Fourier basis or frame, it is known that this rate is linear when $\mathcal{G}_p$ is spanned by wavelets, and quadratic when $\mathcal{G}_p$ is spanned by polynomials. 

\subsection{Generalized sampling with random noise}\label{sub:gs-random}

In what follows, we consider the error bound \eqref{eq:GSbound} when  the measurements of $f$ are perturbed by random noise. To this end, let us assume that the measurements are $\{\langle f,\psi_k \rangle + W_k \}_{k=1}^q$, where $W_1,\ldots,W_q$ are i.i.d.~Gaussian random variables in $\mathbb{C}$ with mean zero and variance $\sigma^2$, i.e.~$\mathrm{Re}(W_1), \mathrm{Im}(W_1),\ldots,\mathrm{Re}(W_q), \mathrm{Im}(W_q)$ are i.i.d.~Gaussian random variables in $\mathbb{R}$ with mean zero and variance $\sigma^2/2$.
Let us now define 
\begin{equation}\label{eq:GS}
\hat{f}_{\mathrm{GS}}:=\sum_{\ell=1}^p \hat{a}_{\ell}  \varphi_{\ell},
\end{equation}
where $\{\hat{a}_j\}_{j=1}^{p} := \argmin_{\{a_j\}_{j=1}^p\subseteq\mathbb{C}} \sum_{k=1}^q \bigl| \langle f,\psi_k \rangle + W_k - \sum_{\ell=1}^p a_{\ell} \langle\varphi_{\ell},\psi_k\rangle \bigr|^2$. For simplicity, let $\{\psi_k\}_{k\in\mathbb{N}}$ be a Riesz basis such that \eqref{eq:rieszbounds} holds and $\{\varphi_{\ell}\}_{\ell\in\mathbb{N}}$ an orthonormal basis, so that we can use $\sigma_{\min}(A_{p,q})\geq \sqrt{r_1} \cos\angle(\mathcal{G}_p,\mathcal{F}_q)$ as well as $\|\hat f_{\mathrm{GS}} - \tilde f_{\mathrm{GS}} \| = \|\hat a - \tilde a \|_2$, where $A_{p,q}$ denotes the system matrix in \eqref{eq:GSsystem}, $\tilde f_{\mathrm{GS}}$ is defined in \eqref{eq:GSorig} and $ \|\hat a - \tilde a \|_2:=\bigl(\sum_{j=1}^p |\hat a_j - \tilde a_j|^2\bigr)^{1/2}$. Since  
$$
\sigma_{\min}(A)=\sigma_{\min}
\begin{pmatrix}
\mathrm{Re}(A) & -\mathrm{Im}(A) \\
 \mathrm{Im}(A) & \mathrm{Re}(A)
\end{pmatrix}
$$
holds for any complex-valued matrix $A$,
by the finite-sample bound for the least squares estimator, see e.g.~\cite{Hsu2012}, for any $\delta_0>0$ we have $\mathbb{P}\bigl\{  \sigma_{\min}(A_{p,q}) \|\hat a - \tilde a \|_2 > \sigma \sqrt{(2p+2\log(1/\delta_0) )/ q}   \bigr\} \linebreak < \delta_0$. 
Thus, if $\cos\angle(\mathcal{G}_p,\mathcal{F}_q)>0$,
then with probability at least $1-\delta_0$, $\hat f_{\mathrm{GS}}$ satisfies
\begin{equation}\label{GSprobbound}
\|\hat f_{\mathrm{GS}} - f \| \leq   \sec\angle(\mathcal{G}_p,\mathcal{F}_q) \Bigl\{\| Q_{\mathcal{G}_p^{\perp}}f \| + \sigma \sqrt{(2p+2\log(1/\delta_0) )/ (q r_1)}  \Bigr\}.
\end{equation}
Moreover, similarly to the approach by \cite{Cohen2013}, if we assume a uniform bound on $f$, that is, for a $\tau>0$ we consider functions $f\in \mathcal{L}_2(D;\mathbb{C})$ such that $\sup_{u\in D} |f(u)| \leq\tau$, and define a truncation operator 
\begin{equation}\label{trunc}
T_{\tau}(g) := \mathrm{sign}(g) \min\{ |g|, \tau\}, \quad g\in \mathcal{L}_2(D;\mathbb{C}),
\end{equation}
so that we may use $\|T_{\tau}(g)-f\|\leq \min\{\|g-f\|, 2\tau \Delta \}$, where $\Delta:=\sqrt{\int_{D}\D u}$,  then from the high probability bound in \eqref{GSprobbound} we obtain the expectation bound
$$
\mathbb{E} \|T_{\tau}(\hat f_{\mathrm{GS}}) - f \| \leq \sec\angle(\mathcal{G}_p,\mathcal{F}_q) \Bigl\{\| Q_{\mathcal{G}_p^{\perp}}f \| + \sigma \sqrt{(2p+2\log(1/\delta_0) )/ (q r_1)}  \Bigr\} + 2\tau \Delta \delta_0.
$$
Furthermore, due to \cite{Mallat2008}, we know that if $\mathcal{G}_p$ is the subspace spanned by the boundary-corrected Daubechies wavelets with $s$ vanishing moments and $f$ is $\gamma$-H\"{o}lder continuous, $\gamma\in(0,s)$, then $\| Q_{\mathcal{G}_p^{\perp}}f \| = \mathcal{O} (p^{-\gamma})$. Thus, in this case, for $\delta_0:=e^{-p}$ and $\epsilon>0$, if $p$ and $q$ are such that $\cos\angle(\mathcal{G}_p,\mathcal{F}_q)>\epsilon$ and $e^{-p}\tau\Delta \lesssim p^{-\gamma}+  \sigma \sqrt{p/q}$,  then   
\begin{equation}\label{GSorder}
\mathbb{E} \|T_{\tau}(\hat f_{\mathrm{GS}}) - f \|  = \mathcal{O} \Bigl( 1 / p^{\gamma} + \sigma \sqrt{p/q} \Bigr).
\end{equation}


\section{Functional Principal Component Analysis (FPCA)}\label{sec:fpca}

If $F$ is a random field with probability measure $P$ on $\mathcal{L}_2(D;\mathbb{C})$ with mean $\mu(u):=\mathbb{E}[F(u)]$ and covariance $K(u,v):=\mathbb{E}[(F(u)-\mu(u))\overline{(F(v)-\mu(v))}]$, $u,v\in D$, then
by Mercer's lemma, there exist a non-increasing sequence of non-negative eigenvalues $\lambda_1\geq\lambda_2\geq\cdots\geq0$ and an orthonormal sequence of eigenfunctions  $\{\phi_j\}_{j\in\mathbb{N}}$ of the covariance operator $K$ such that $\int_{D} K(u,v) \phi_j(u) \D u = \lambda_j \phi_j(v)$,  $K(u,v)=\sum_{j\in\mathbb{N}} \lambda_j \phi_j(u) \overline{\phi_j(v)}$ and such that 
\begin{equation}\label{eq:KLexp}
F=\mu + \sum_{j\in\mathbb{N}} \sqrt{\lambda_j} \xi_j \phi_j,
\end{equation}
where $\xi_j:=\lambda_j^{-1/2}\langle F-\mu,\phi_j \rangle$ are uncorrelated random variables with zero mean and unit variance. Moreover, if $F$ is a Gaussian field, then $\xi_j$ are standard Gaussian random variables. Eigenfunctions $\{\phi_j\}_{j\in\mathbb{N}}$ are also known as functional principal components (FPCs) of $F$ and the expression \eqref{eq:KLexp} is known as the Karhunen-Loeve (KL) expansion of $F$, see for example \cite{RS2005}. Such representation of $F$ is known to be optimal in the following sense:
\begin{equation}\label{eq:KLopt}
\{\phi_j\}_{j=1}^{m} = \argmin_{\{\{\varphi_j\}_{j=1}^m:\langle\varphi_j,\varphi_k\rangle=\delta_{jk}\}} \mathbb{E}  \| F-\mu - \sum_{j=1}^m\langle F-\mu,\varphi_j \rangle \varphi_j \|^2, 
\end{equation}
for any $m\in\mathbb{N}$, where $\delta_{jk}=1$ if $j=k$ and zero otherwise. 


\subsection{Empirical high-resolution functional principal components}\label{sub:fpca-empirical}

Since in practice we observe only finitely many noisy coefficients of $F$ with respect to the first $p$ elements of an orthonormal basis $\{\varphi_j\}_{j\in\mathbb{N}}$, 
let us now consider the finite-dimensional high-resolution subspace $\mathcal{G}_p:=\text{span}\{\varphi_1,\ldots,\varphi_p\}\subseteq\mathcal{L}_2(D;\mathbb{C})$ and let $Q_{\mathcal{G}_p}$ denote the orthogonal projection onto $\mathcal{G}_p$. 
First, consider a $\mathcal{G}_p$-valued random variable $Q_{\mathcal{G}_p}F=\sum_{j=1}^{p}\langle  F ,\varphi_j  \rangle \varphi_j$, whose mean is denoted by  $\mu_p:=\mathbb{E}[Q_{\mathcal{G}_p}F]=Q_{\mathcal{G}_p}\mu$ and covariance $K_p(u,v) := \mathbb{E}[(Q_{\mathcal{G}_p}F(u)-\mu_p(u))\overline{(Q_{\mathcal{G}_p}F(v)-\mu_p(v))}]$, 
with the corresponding eigenfunctions and eigenvalues denoted by $\{\phi_j^p\}_{j=1}^{p}$ and $\{\lambda_j^p\}_{j=1}^p$, respectively. 
If we now define a $\mathbb{C}^p$-valued random variable 
$$
X:=X(F)=(\langle  F ,\varphi_1  \rangle, \ldots, \langle F ,\varphi_p  \rangle)^{\top},
$$
we see that its mean vector $\mu_{X}:= \mathbb{E}[X]$ is equal to $(\langle  \mu,\varphi_1  \rangle, \ldots, \langle  \mu,\varphi_p  \rangle)^{\top}$ and its covariance matrix $\Sigma_X:=\mathbb{E} [(X - \mu_{X}) \overline{(X -\mu_{X})}]$ satisfies
$K_p(u,v) = (\varphi_1(u), \ldots, \varphi_p(u))  \Sigma_X (\overline{\varphi_1(v)}, \ldots, \overline{\varphi_p(v)} )^{\top}.$
Writing 
$$
e^p_j:=(\langle \phi_j^p ,\varphi_1 \rangle,\ldots, \langle \phi_j^p ,\varphi_p \rangle)^{\top},\quad j=1,\ldots,p,
$$
it then follows that $\Sigma_X e^p_j = \lambda^p_j e^p_j$ and $\Sigma_X^{(k\ell)} 
= \sum_{j=1}^p \lambda^p_j \langle \phi_j^p ,\varphi_k \rangle \overline{\langle \phi_j^p ,\varphi_{\ell} \rangle}$, $k,\ell=1,\ldots,p$. 
Moreover, if $F$ is a Gaussian random field, then $X$ is a multivariate Gaussian with mean $\mu_X$ and covariance $\Sigma_X$, since any finite-dimensional section of a Gaussian process is a multivariate Gaussian.


We can now model the training observations \eqref{eq:observations_coef} as realizations of i.i.d.~multivariate random variables  
\begin{equation}\label{eq:Ys}
Y_i := Y_i(F_i,Z_i) = X_i(F_i) + Z_i, \quad i=1,\ldots,n,
\end{equation}
where $X_i:=X_i(F_i)=(\langle  F_i ,\varphi_1  \rangle, \ldots, \langle F_i ,\varphi_p  \rangle)^{\top}$, $F_1,\ldots,F_n\sim^{\mathrm{iid}} P$ and $Z_1,\ldots,Z_n$ are i.i.d.~Gaussian on $\mathbb{C}^p$ with mean zero and covariance $\tilde\sigma^2I_p$, which are also independent of $F_1,\ldots,F_n$.  
If $P$ is Gaussian, then  $Y_1,\ldots,Y_n$ are Gaussian 
with mean $\mu_Y=\mu_X$ and covariance $\Sigma_Y=\Sigma_X + \tilde\sigma^2 I_p$, in which case
it is known that the eigenvectors $\hat e^p_1, \ldots, \hat e^p_p$ of the sample covariance matrix $\hat\Sigma_{Y}:=n^{-1}\sum_{i=1}^n (Y_i-\hat{\mu}_{Y})\overline{(Y_i-\hat{\mu}_{Y})}$,  where $\hat{\mu}_{Y}:=n^{-1}\sum_{i=1}^n Y_i$, are consistent estimators of the eigenvectors $e^p_1,\ldots,e^p_p$ of $\Sigma_X$ as $p/n\rightarrow0$, e.g.~\cite{Koltchinskii2017}. 
Moreover, by utilizing the classical results of the Galerkin method, e.g.~\cite{BO1987}, we can obtain the following high-probability bound on the distance between the space spanned by the eigenfunctions at the population level, $\mathcal{E}_m:=\text{span}\{\phi_1,\ldots,\phi_m\}$, and the space spanned by the high-resolution empirical eigenfunctions, $\hat{\mathcal{E}}^p_m:=\text{span}\{\hat\phi_1^p,\ldots,\hat\phi_m^p\}$, where
 $\hat\phi_j^p:=(\varphi_1,\ldots,\varphi_p)\hat e_j^p$.

\begin{lemma}\label{lem:eig_mean_est} Let $P$ be  a Gaussian measure on $\mathcal{L}_2(D;\mathbb{C})$ with mean $\mu$, eigenfunctions $\{\phi_j\}_{j\in\mathbb{N}}$ and eigenvalues $\{\lambda_j\}_{j\in\mathbb{N}}$, and let $\{\varphi_j\}_{j\in\mathbb{N}}$ be an orthonormal basis in $\mathcal{L}_2(D;\mathbb{C})$. For any $m\in\mathbb{N}$ and $p\geq m$, let $\mathcal{E}_m:=\text{span}\{\phi_1,\ldots,\phi_m\}$,  $\mathcal{G}_p:=\text{span}\{\varphi_1,\ldots,\varphi_p\}$,  $\epsilon_{p}:=\max_{j=1,\ldots,m} \| Q_{\mathcal{G}_p^{\perp}}\phi_j \|$ and $\epsilon_{p}':=\|Q_{\mathcal{G}_p^{\perp}}\mu \|$. For any $n\geq p$, let $F_1,\ldots,F_n\sim^{\mathrm{iid}} P$ and let $Y_1,\ldots,Y_n$ be as in \eqref{eq:Ys}, and also define  $\hat\mu_p:=(\varphi_1,\ldots,\varphi_p) \hat\mu_Y$,   $\hat\phi_j^p:=(\varphi_1,\ldots,\varphi_p)\hat e_j^p$, and $\hat{\mathcal{E}}^p_m:=\text{span}\{\hat\phi_1^p,\ldots,\hat\phi_m^p\}$. Then 
\begin{itemize}
 \item[$(a)$]  there exist $C,\tilde{C}$  and $p_0$ such that for any $p\geq \max\{p_0,m\}$ and $\delta\in[2e^{-n},1)$ with probability at least $1-\delta$ we have
$$\sin\angle(\mathcal{E}_m,\hat{\mathcal{E}}^p_m) \leq  C \epsilon_{p} \sqrt{\frac{m}{2}}  + \frac{2 \tilde{C}\sqrt{m}(\lambda_1+\tilde\sigma^2)}{\lambda_m-\lambda_{m+1} - C \lambda_m^2 \epsilon_{p}^2} \biggl(\sqrt{\frac{p}{n}}+\sqrt{\frac{1}{n}\log\frac{2}{\delta}}\biggl)  =: \tilde\epsilon_{mpn\delta},$$
provided that $\lambda_{m}-\lambda_{m+1}>C \lambda_m^2 \epsilon_{p}^2 + 2\tilde{C} (\lambda_1+\tilde\sigma^2)(\sqrt{p/n}+\sqrt{\log(2/\delta)/n})$, 
 \item[$(b)$] for any $\delta'\in(0,1)$ with probability at least $1-\delta'$ we have
$$\|\mu  -\hat\mu_p \| \leq \epsilon_{p}'  +  \sqrt{\lambda_1+\tilde\sigma^2} \biggl( \sqrt{\frac{p}{n}} + \sqrt{\frac{2}{n}\log\frac{1}{\delta'} } \biggr)  =: \bar\epsilon_{pn\delta'}.$$
\end{itemize}
\end{lemma}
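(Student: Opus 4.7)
The plan is to treat parts $(a)$ and $(b)$ separately; each reduces to combining a deterministic Galerkin-type approximation step with a probabilistic concentration step, then joining them by a triangle inequality and a union bound. For part $(a)$, I would introduce the intermediate ``population-at-resolution-$p$'' subspace $\mathcal{E}_m^p:=\mathrm{span}\{\phi_1^p,\ldots,\phi_m^p\}$ and split the error via $\sin\angle(\mathcal{E}_m,\hat{\mathcal{E}}_m^p) \leq \sin\angle(\mathcal{E}_m,\mathcal{E}_m^p) + \sin\angle(\mathcal{E}_m^p,\hat{\mathcal{E}}_m^p)$. The first (deterministic) term is a Galerkin eigenvector perturbation: since $K_p$ is essentially $Q_{\mathcal{G}_p} K Q_{\mathcal{G}_p}$ and $\epsilon_p=\max_{j\leq m}\|Q_{\mathcal{G}_p^\perp}\phi_j\|$, the classical results for the Galerkin method applied to the eigenproblem of a compact self-adjoint operator yield, for all $p\geq p_0$, the individual bounds $\|\phi_j-\phi_j^p\|\leq C\epsilon_p$ and $|\lambda_j-\lambda_j^p|\leq C\lambda_j^2\epsilon_p^2$ for each $j\leq m$, where the threshold $p_0$ preserves separation of the relevant eigenvalues. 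The corresponding subspace bound then follows from the Bessel-type inequality $\sin^2\angle(V,W)\leq \sum_{j=1}^m \|(I-P_W)v_j\|^2$ for any orthonormal basis $\{v_j\}_{j=1}^m$ of $V$, producing the $C\epsilon_p\sqrt{m/2}$ contribution.

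For the second (random) term I would invoke the Davis--Kahan $\sin\Theta$ theorem applied to $\Sigma_Y$ and $\hat\Sigma_Y$. Because $\Sigma_Y=\Sigma_X+\tilde\sigma^2 I_p$ shares eigenvectors with $\Sigma_X$ and the relevant eigengap reduces to $\lambda_m^p-\lambda_{m+1}^p$, Davis--Kahan combined with summing individual eigenvector errors gives
\[
\sin\angle(\mathcal{E}_m^p,\hat{\mathcal{E}}_m^p) \leq \frac{\sqrt{m}\,\|\hat\Sigma_Y-\Sigma_Y\|_{\mathrm{op}}}{\lambda_m^p-\lambda_{m+1}^p}.
\]
For the numerator, operator-norm concentration of the sample covariance of $n$ i.i.d.\ complex Gaussian vectors (a Koltchinskii--Lounici-type bound) gives, with probability at least $1-\delta/2$, that $\|\hat\Sigma_Y-\Sigma_Y\|_{\mathrm{op}} \leq \tilde C\|\Sigma_Y\|_{\mathrm{op}}(\sqrt{p/n}+\sqrt{\log(2/\delta)/n})$, together with $\|\Sigma_Y\|_{\mathrm{op}}\leq\lambda_1+\tilde\sigma^2$; the hypothesis $\delta\geq 2e^{-n}$ keeps the deviation $\sqrt{\log(2/\delta)/n}$ of order one, so that the concentration bound is in the useful regime. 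For the denominator, the Galerkin step supplies a Weyl-type lower bound $\lambda_m^p-\lambda_{m+1}^p \geq \lambda_m-\lambda_{m+1}-2C\lambda_m^2\epsilon_p^2$, which is positive under the stated hypothesis, and matches exactly the denominator displayed in $(a)$. A union bound then finalises $(a)$.

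For part $(b)$, I would decompose $\|\mu-\hat\mu_p\| \leq \|\mu-\mu_p\| + \|\mu_p-\hat\mu_p\|$, where the first summand equals $\|Q_{\mathcal{G}_p^\perp}\mu\|=\epsilon_p'$ by definition, and the second, by orthonormality of $\{\varphi_\ell\}_{\ell=1}^p$, equals the Euclidean norm $\|\mu_Y-\hat\mu_Y\|_2$ in $\mathbb{C}^p$. Writing $\hat\mu_Y-\mu_Y=\Sigma_Y^{1/2}W/\sqrt{n}$ for a standard complex Gaussian $W\in\mathbb{C}^p$, one has $\|\hat\mu_Y-\mu_Y\|_2\leq \sqrt{\|\Sigma_Y\|_{\mathrm{op}}/n}\,\|W\|_2$, and a Laurent--Massart chi-squared tail yields $\|W\|_2\leq \sqrt{p}+\sqrt{2\log(1/\delta')}$ with probability at least $1-\delta'$. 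Combining with $\|\Sigma_Y\|_{\mathrm{op}}\leq\lambda_1+\tilde\sigma^2$ produces the stated bound.

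The main obstacle lies in the Galerkin step and its coupling into the Davis--Kahan denominator: one must translate per-eigenfunction approximation bounds into a clean $m$-dimensional subspace-angle bound while simultaneously invoking Galerkin eigenvalue perturbation to ensure the perturbed gap $\lambda_m^p-\lambda_{m+1}^p$ stays strictly positive under the explicit hypothesis of the lemma. By contrast, the concentration ingredients are standard, and only the complex-Gaussian normalisations require careful bookkeeping to produce the precise constants $\tilde C$ and the factor $\sqrt{2}$ appearing in $(b)$.
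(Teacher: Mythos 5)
Your proposal follows essentially the same route as the paper's proof: the same split through the intermediate space $\mathcal{E}_m^p$ with Galerkin bounds $\|\phi_j-\phi_j^p\|\leq C\epsilon_p$ and $\lambda_j-\lambda_j^p\leq C\lambda_j^2\epsilon_p^2$ feeding a Davis--Kahan plus Koltchinskii--Lounici argument for $(a)$, and the same projection-plus-Gaussian-mean-concentration decomposition for $(b)$. The only deviations are cosmetic constants (your Frobenius/Bessel step gives $\sqrt{m}$ rather than the paper's $\sqrt{m/2}$, and the paper keeps the factor $2$ in the Davis--Kahan bound by comparing to the empirical eigenvalue via Weyl and uses $\lambda_{m+1}^p\leq\lambda_{m+1}$ so only a single $C\lambda_m^2\epsilon_p^2$ is lost in the gap), none of which affects the validity of the argument.
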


The proof of Lemma~\ref{lem:eig_mean_est} is given in Appendix~\ref{app:proofs}. We now discuss the order of bounds $\tilde\epsilon_{mpn\delta}$ and $\bar\epsilon_{pn\delta'}$ derived in this lemma, since these play an important role later on. First of all, observe that the order of the second summand in these bounds is $\sqrt{mp/n}$ and $\sqrt{p/n}$ respectively, provided $\delta,\delta'\geq 2e^{-p/2}$ and provided the eigenvalue gap $\lambda_{m}-\lambda_{m+1}$ is lower-bunded as stated in Lemma~\ref{lem:eig_mean_est}, the latter one being a typical assumption required for consitensy of PCA estimation \cite{Koltchinskii2017,Ma2013}. 
Moreover, 
if additionaly 
$P$ is a probability measure on
the space of $\gamma$-H\"{o}lder continuous functions and $\mathcal{G}_p$ is the $p$-dimensional space of boundary-corrected wavelets with $s>\gamma$ vanishing moments, then $\max\{\epsilon_p,\epsilon_p'\}=\mathcal{O}(p^{-\gamma})$, and therefore, 
we have 
\begin{equation}\label{PCAorder}
\max\{\tilde\epsilon_{mpn\delta},\bar\epsilon_{pn\delta'}\} = \mathcal{O} \Bigl( \sqrt{m} / p^{\gamma} + (\lambda_1+\tilde{\sigma}^2)\sqrt{mp/n}\Bigr).
\end{equation}
Note that such bound improves with increasing $p$, provided $n$ is also increasing. In particular,
if $n\gtrsim p^{2\gamma+1}$, then we can obtain the $\mathcal{G}_p$-rate of approximation, $p^{-\gamma}$, up to factor $\sqrt{m}$, namely for such $n$ we have the bound of order $\sqrt{m} / p^{\gamma}$.

\section{GS-FPCA reconstruction method}\label{sec:gs-fpca}

In this section, we introduce and analyze a method for computing an estimate of an unknown $\mathcal{L}_2(D;\mathbb{C})$-function $f$ from its noisy measurements taken with respect to the first $q$ elements of a Riesz basis $\{\psi_k\}_{k\in\mathbb{N}}$ in $\mathcal{L}_2(D;\mathbb{C})$, 
 by leveraging the statistical information contained in the noisy coefficients of a training set $\{f_i\}_{i=1}^n$ with respect to the first $p$ elements of another orthonormal basis $\{\varphi_{\ell}\}_{\ell\in\mathbb{N}}$ in $\mathcal{L}_2(D;\mathbb{C})$. The main steps of the reconstruction method are summarized in Algorithm \ref{Algo:gs-fpca}.  If step {\footnotesize\textbf 1} of Algorithm \ref{Algo:gs-fpca} is computed using classical PCA and step {\footnotesize\textbf 3} is computed using least-squares, then the resulting procedure corresponds to the algorithm theoretically analyzed in this section. We note however, that classical PCA can be replaced by sparse PCA, while least-squares can be regularized by an $\ell_2$-term, as outlined in Sections~\ref{sub:spca} and \ref{sub:rr}, respectively.

\begin{algorithm}[htbp]
\SetAlgoLined
\IncMargin{1em}
\DontPrintSemicolon
\KwIn{$\{\varphi_{\ell}\}_{\ell=1}^p$, $\{\psi_k\}_{k=1}^q$, 
$\{b_k:=\langle f,\psi_k \rangle + w_k \}_{k=1}^q\subseteq \mathbb{C}$, 
$\{y_i := (\langle f_i, \varphi_{1} \rangle + z_{i 1},\ldots,\langle f_i, \varphi_{p} \rangle + z_{i p})^{\top}\}_{i=1}^n \subseteq \mathbb{C}^p$, $m\in\mathbb{N}$, $m<p$
}
Compute first $m$ eigenvectors $\{\hat e^p_j\}_{j=1}^m$ of $n^{-1}\sum_{i=1}^n (y_i-\hat\mu_{Y})\overline{(y_i-\hat\mu_{Y})}$, $\hat\mu_{Y}:=n^{-1}\sum_{i=1}^n y_i$.\;
Compute coefficients $\langle \hat\phi^p_j, \psi_k \rangle$ and $ \langle \hat\mu_p,\psi_k \rangle$, where $\hat\phi_j^p:=(\varphi_1,\ldots,\varphi_p)\hat e_j^p$, $\hat\mu_p:=(\varphi_1,\ldots,\varphi_p)\hat\mu_Y$.\;
Compute $\{\hat\alpha_j\}_{j=1}^m$ by solving for $\{\alpha_j\}_{j=1}^m$ in 
\begin{equation*}
\begin{pmatrix} \langle \hat\phi^p_1, \psi_1 \rangle & \cdots & \langle \hat\phi^p_m, \psi_1 \rangle  \\ \vdots & & \vdots \\  \langle \hat\phi^p_1, \psi_q \rangle & \cdots & \langle \hat\phi^p_m, \psi_q \rangle  \end{pmatrix} \begin{pmatrix} \alpha_1\\ \vdots\\ \alpha_m \end{pmatrix} = \begin{pmatrix} b_1 - \langle \hat\mu_p,\psi_1 \rangle\\ \vdots \\ b_q - \langle \hat\mu_p,\psi_q \rangle \end{pmatrix}.
\end{equation*}\;
\KwOut{$\hat f_{\mathrm{GS\text{-}FPCA}} := \hat\mu_p + \sum_{j=1}^m \hat\alpha_j \hat \phi_j^p$.}
\caption{Pseudo-code of the GS-FPCA algorithm}
\label{Algo:gs-fpca}
\end{algorithm}
 
In what follows, we provide a theoretical analysis of GS-FPCA reconstuction for any realization of a $\mathcal{L}_2(D;\mathbb{C})$-valued random field $F$ and a random sample $F_1,\ldots,F_n$. To this end, we consider the following random variables:
\begin{itemize}
\item[(i)]  $\{\langle F,\psi_k \rangle + W_k \}_{k=1}^q$, where  $W_1,\ldots,W_q$ are i.i.d.~Gaussian in $\mathbb{C}$ with mean zero and variance $\sigma^2$, which are also independent of $F_1,\ldots,F_n$. In particular, these random variables yield a realization of the low-resolution measurements in \eqref{eq:observations_Fourier}.
\item[(ii)] $\{Y_i :=Y_i(F_i,Z_i)= (\langle F_i, \varphi_{1} \rangle + Z_{i 1},\ldots,\langle F_i, \varphi_{p} \rangle + Z_{i p})^{\top}\}_{i=1}^n$, where $\{Z_i:= (Z_{i 1},\ldots, Z_{i p})^{\top}\}_{i=1}^n$ are i.i.d.~Gaussian in $\mathbb{C}^p$ with mean zero and covariance $\tilde\sigma^2I_p$, which are also independent of $F_i$'s and $W_k$'s.  As discussed previously, we model our high-resolution training  set \eqref{eq:observations_coef} as a realization of such random variables.
\end{itemize}
We define the reconstruction space as
\begin{equation}\label{eq:recspace}
\hat{\mathcal{E}}_m^p:=\text{span}\{\hat\phi_1^p,\ldots,\hat\phi_m^p\}, 
\end{equation}
where $\hat\phi_j^p:=(\varphi_1,\ldots,\varphi_p)\hat e_j^p$  and $\hat e^p_j$ is defined as the $j$th eigenvector of the sample covariance  $\hat{\Sigma}_{Y}:=n^{-1}\sum_{i=1}^n (Y_i-\hat\mu_Y)\overline{(Y_i-\hat\mu_Y)}$, where $\hat\mu_Y:=n^{-1}\sum_{i=1}^n Y_i$ denotes the sample mean. Namely, 
\begin{equation}\label{eq:eigenvec}
\hat e^p_j := \argmax_{\{v\in\mathbb{C}^p:\|v\|_2=1\}} \bar{v} \hat\Sigma_{Y}^{(j-1)} v, \quad j=1,\ldots,p,
\end{equation}
where $\hat\Sigma_{Y}^{(j)} := (I_p -\hat e^p_j\overline{\hat e^p_j}) \hat\Sigma_{Y}^{(j-1)} (I_p -\hat e^p_j\overline{\hat e^p_j} )$ and $\hat\Sigma_{Y}^{(0)}:=\hat\Sigma_{Y}$. 
Writing $\hat\mu_p:=(\varphi_1,\ldots,\varphi_p)\hat\mu_Y$, we now propose to estimate $F-\hat\mu_p$ in the reconstruction space $\hat{\mathcal{E}}_m^p$ defined in \eqref{eq:recspace}.
Specifically, we define the estimator of $F$ as 
\begin{equation}\label{eq:GSFPCA}
\hat F_{\mathrm{GS\text{-}FPCA}} := \hat\mu_p + \sum_{j=1}^m \hat\alpha_j \hat \phi_j^p,
\end{equation}
where the coefficients $\{\hat\alpha_j\}_{j=1}^{m}$ are the least-square solution to the linear system
\begin{equation}\label{eq:GSFPCAsystem}
\begin{pmatrix} \langle \hat\phi^p_1, \psi_1 \rangle & \cdots & \langle \hat\phi^p_m, \psi_1 \rangle  \\ \vdots & & \vdots \\  \langle \hat\phi^p_1, \psi_q \rangle & \cdots & \langle \hat\phi^p_m, \psi_q \rangle  \end{pmatrix} \begin{pmatrix} \alpha_1\\ \vdots\\ \alpha_m \end{pmatrix} = \begin{pmatrix} \langle F,\psi_1 \rangle + W_1 \\ \vdots \\ \langle F,\psi_q \rangle + W_q   \end{pmatrix} -
\begin{pmatrix}  \langle \hat\mu_p,\psi_1 \rangle \\ \vdots \\  \langle \hat\mu_p,\psi_q \rangle \end{pmatrix},
\end{equation}
with respect to $\{\alpha_j\}_{j=1}^m\in\mathbb{C}^m$, 
namely 
\begin{equation}\label{eq:hatalpha}
\{\hat\alpha_j\}_{j=1}^{m} := \argmin_{\{\alpha_j\}_{j=1}^m\subseteq\mathbb{C}} \sum_{k=1}^q \bigl| \langle F,\psi_k \rangle +W_k -\langle \hat\mu_p,\psi_k \rangle -  \sum_{j=1}^m \alpha_j  \langle\hat\phi^p_j,\psi_k\rangle \bigr|^2.
\end{equation}
It is useful to note that, if we denote the random system matrix in \eqref{eq:GSFPCAsystem} by $\hat{A}_{m,q}$, which takes values in $\mathbb{C}^{q\times m}$,
and the system matrix in \eqref{eq:GSsystem} by $A_{p,q}\in\mathbb{C}^{q\times p}$,  
since $ \langle\hat\phi^p_j, \psi_k \rangle=\sum_{\ell=1}^p  (\hat{e}_j^p)^{(\ell)}  \langle \varphi_{\ell}, \psi_k \rangle$ and $\langle\hat\mu_p, \psi_k \rangle=\sum_{\ell=1}^p  (\hat\mu_Y)^{(\ell)}  \langle \varphi_{\ell}, \psi_k \rangle$, we have $\hat{A}_{m,q}=A_{p,q}( \hat e^p_1, \ldots, \hat e^p_m )$.

When compared to the GS-reconstruction $\hat{f}_{\mathrm{GS}}$ defined in \eqref{eq:GS}, our reconstruction $\hat{F}_{\mathrm{GS\text{-}FPCA}}$ defined in \eqref{eq:GSFPCA} also takes values in  $\mathcal{G}_p$, but now the well-posedness of our solution depends on the value of random variable $\cos\angle(\hat{\mathcal{E}}^p_m,\mathcal{F}_q)\propto\sigma_{\min}(\hat{A}_{m,q})$ instead of $\cos\angle(\mathcal{G}_p,\mathcal{F}_q)\propto\sigma_{\min}(A_{p,q})$. For a sufficiently large $n$, we can show that our proposed estimator
can  stably achieve the $\mathcal{G}_p$-rate of approximation provided $m$ and $q$ are such that $\cos\angle(\mathcal{E}_m,\mathcal{F}_q)$ is bounded away from zero and $\| Q_{\mathcal{E}_m^{\perp}}  (F-\mu) \|$ is sufficiently small. Specifically, we can show the following.

\begin{theorem}\label{thm:main}
Consider the setting of Lemma~$\ref{lem:eig_mean_est}$ and let $m,q,p,n$ and $\delta$ be  such that $\sin\angle(\mathcal{E}_m,\mathcal{F}_q)<1-\tilde\epsilon_{mpn\delta}$, where $\mathcal{F}_q:=\text{span}\{\psi_1,\ldots,\psi_q\}$
and $\{\psi_k\}_{k\in\mathbb{N}}$ is such that \eqref{eq:rieszbounds} holds. Then there exists $p_0$ such that for any $p\geq \max\{p_0,m\}$, $n\geq p$ and any $\mathcal{L}_2(D;\mathbb{C})$-valued random field $F$,  estimator $\hat{F}_{\mathrm{GS\text{-}FPCA}}$ defined in \eqref{eq:GSFPCA}  satisfies
\begin{align*}
\|\hat{F}_{\mathrm{GS\text{-}FPCA}}-F\|\leq \frac{ \| Q_{\mathcal{E}_m^{\perp}}  (F-\mu)  \| +\tilde\epsilon_{mpn\delta}\|F-\mu\| +\bar\epsilon_{np\delta'} +\sigma(2m+2\log(1/\delta''))^{1/2}(qr_1)^{-1/2}}{{1 - \sin\angle(\mathcal{E}_m,\mathcal{F}_q) - \tilde\epsilon_{mpn\delta}}}
\end{align*}
with probability  at least $1-\delta-\delta'-\delta''$.
\end{theorem}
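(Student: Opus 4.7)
The plan is to reduce the theorem to a Generalized Sampling bound applied to the random function $F-\hat\mu_p$ in the random subspace $\hat{\mathcal{E}}_m^p$, and then to absorb the randomness coming from the training set via Lemma~\ref{lem:eig_mean_est}. First I would observe that $\{\hat\phi_j^p\}_{j=1}^m$ is an orthonormal basis of $\hat{\mathcal{E}}_m^p$ (since $\{\hat e_j^p\}_{j=1}^m$ is orthonormal in $\mathbb{C}^p$ and $\{\varphi_\ell\}_{\ell=1}^p$ is orthonormal), and that the linear system \eqref{eq:GSFPCAsystem} is precisely the GS system for recovering $F-\hat\mu_p$ in $\hat{\mathcal{E}}_m^p$ from the perturbed measurements $\langle F-\hat\mu_p,\psi_k\rangle+W_k$. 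Conditional on the training data $\{(F_i,Z_i)\}_{i=1}^n$, the space $\hat{\mathcal{E}}_m^p$ and the function $\hat\mu_p$ are deterministic while $\{W_k\}_{k=1}^q$ is independent of $F$, so applying \eqref{GSprobbound} conditionally (with $p \to m$, $\mathcal{G}_p \to \hat{\mathcal{E}}_m^p$, $f \to F-\hat\mu_p$) yields, with conditional probability at least $1-\delta''$,
\begin{equation*}
\|\hat F_{\mathrm{GS\text{-}FPCA}} - F\| \leq \sec\angle(\hat{\mathcal{E}}_m^p,\mathcal{F}_q)\Bigl\{\|Q_{(\hat{\mathcal{E}}_m^p)^\perp}(F-\hat\mu_p)\| + \sigma\sqrt{(2m+2\log(1/\delta''))/(qr_1)}\Bigr\}.
\end{equation*}

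Next I would bound the secant of the angle. On the event $E_1$ of Lemma~\ref{lem:eig_mean_est}(a), one has $\sin\angle(\mathcal{E}_m,\hat{\mathcal{E}}_m^p)\leq\tilde\epsilon_{mpn\delta}$. For any unit $v\in\hat{\mathcal{E}}_m^p$ I would decompose $v=Q_{\mathcal{E}_m}v+Q_{\mathcal{E}_m^\perp}v$ and use $\|Q_{\mathcal{F}_q^\perp}|_{\mathcal{E}_m}\|=\sin\angle(\mathcal{E}_m,\mathcal{F}_q)$ together with $\|Q_{\mathcal{E}_m^\perp}v\|\leq\sin\angle(\hat{\mathcal{E}}_m^p,\mathcal{E}_m)$ to get $\sin\angle(\hat{\mathcal{E}}_m^p,\mathcal{F}_q)\leq\sin\angle(\mathcal{E}_m,\mathcal{F}_q)+\tilde\epsilon_{mpn\delta}$, which is strictly less than $1$ by hypothesis. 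Combining with the elementary inequality $\cos\theta\geq 1-\sin\theta$ for $\theta\in[0,\pi/2]$ then gives $\sec\angle(\hat{\mathcal{E}}_m^p,\mathcal{F}_q)\leq 1/(1-\sin\angle(\mathcal{E}_m,\mathcal{F}_q)-\tilde\epsilon_{mpn\delta})$, which is exactly the denominator in the claim.

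Then I would bound the residual term in the numerator. Writing $F-\hat\mu_p=(F-\mu)+(\mu-\hat\mu_p)$ and decomposing $F-\mu$ along $\mathcal{E}_m$ and $\mathcal{E}_m^\perp$, I would use that the restriction of $Q_{(\hat{\mathcal{E}}_m^p)^\perp}$ to $\mathcal{E}_m$ has operator norm equal to $\sin\angle(\mathcal{E}_m,\hat{\mathcal{E}}_m^p)$ so that, on $E_1$ together with the event $E_2$ of Lemma~\ref{lem:eig_mean_est}(b),
\begin{equation*}
\|Q_{(\hat{\mathcal{E}}_m^p)^\perp}(F-\hat\mu_p)\| \leq \tilde\epsilon_{mpn\delta}\|F-\mu\| + \|Q_{\mathcal{E}_m^\perp}(F-\mu)\| + \bar\epsilon_{pn\delta'}.
\end{equation*}
By a union bound, $E_1$, $E_2$ and the conditional GS noise event all hold simultaneously with probability at least $1-\delta-\delta'-\delta''$, and on their intersection substitution of the three displays yields the theorem.

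The main obstacle is the subspace inequality $\sin\angle(\hat{\mathcal{E}}_m^p,\mathcal{F}_q)\leq\sin\angle(\mathcal{E}_m,\mathcal{F}_q)+\sin\angle(\mathcal{E}_m,\hat{\mathcal{E}}_m^p)$. Because $\mathcal{F}_q$ and $\mathcal{E}_m$ may have different dimensions, the sine of the largest principal angle is not a priori a metric on the collection of subspaces involved, so a generic triangle inequality is unavailable. The orthogonal-decomposition argument sketched above circumvents this by exploiting that $\mathcal{E}_m$ and $\hat{\mathcal{E}}_m^p$ share the common dimension $m$ and that $\sin\angle(\mathcal{E}_m,\mathcal{F}_q)$ already controls the norm of $Q_{\mathcal{F}_q^\perp}$ restricted to $\mathcal{E}_m$; the remaining care is simply to ensure all events in the union bound refer to the same probability space, which is immediate since $F$, $(W_k)$ and the training set are jointly independent.
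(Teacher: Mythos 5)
Your proposal is correct and follows essentially the same route as the paper: the same bound $\cos\angle(\hat{\mathcal{E}}_m^p,\mathcal{F}_q)\geq 1-\sin\angle(\mathcal{E}_m,\mathcal{F}_q)-\sin\angle(\mathcal{E}_m,\hat{\mathcal{E}}_m^p)$ obtained by decomposing along $\mathcal{E}_m$ and $\mathcal{E}_m^{\perp}$, the same three-term bound on $\|Q_{(\hat{\mathcal{E}}_m^p)^{\perp}}(F-\hat\mu_p)\|$, and the same conditional treatment of the noise event before the union bound. The only cosmetic difference is that you invoke the packaged noisy GS bound \eqref{GSprobbound} applied conditionally to $\hat{\mathcal{E}}_m^p$, whereas the paper re-runs its derivation by introducing the noiseless solution $\tilde F_0$ and the least-squares perturbation $\|\hat\alpha-\tilde\alpha\|_2$ separately; these are the same argument.
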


The proof of this theorem is given in Appendix~\ref{app:proofs}, while here we discuss its consequences.
First recall that due to \eqref{PCAorder}, if the approximation rate in $\mathcal{G}_p$ is of order $p^{-\gamma}$ and provided $n\gtrsim p^{2\gamma+1}$, $\delta,\delta'\geq 2e^{-p/2}$ and $\lambda_{m}-\lambda_{m+1}$ is lower bounded, then $\max\{\tilde\epsilon_{mpn\delta},\bar\epsilon_{pn\delta'}\} = \mathcal{O} \bigl( \sqrt{m} / p^{\gamma} \bigr)$. Thus,  if also $\delta''\geq e^{-m}$, under the conditions of Theorem~\ref{thm:main}---namely, for a fixed $m$ and constant $\epsilon\gtrsim  \sqrt{m} / p_0^{\gamma}$, if $q=q(m)$ is sufficiently large so that $\cos\angle(\mathcal{E}_m,\mathcal{F}_q) > \sqrt{\epsilon}$---then, with probability at least $1-\delta-\delta'-\delta''$, we have
\begin{equation*}
\|\hat{F}_{\mathrm{GS\text{-}FPCA}}-F\| = \mathcal{O} \Bigl( \| Q_{\mathcal{E}_m^{\perp}}  (F-\mu)  \| + (\|F-\mu\|+1)   \sqrt{m} / p^{\gamma}  +  \sigma\sqrt{m/q} \Bigr).
\end{equation*}
We note that this result holds for any $\mathcal{L}_2(D;\mathbb{C})$-valued random field $F$, however if $F\sim P$ and $F$ is independent of $F_1,\ldots,F_n,$ as well as of the noise variables $Z_i$'s and $W_k$'s, then we can bound the expectation of the right-hand side by using the KL expansion of $F$ \eqref{eq:KLexp}. In particular, similarly as in Section~\ref{sub:gs-random}, by introducing the truncation operator \eqref{trunc} and considering probability measure $P$ on the space of uniformly $\tau$-bounded functions in $\mathcal{L}_2(D;\mathbb{C})$, 
if  $n\gtrsim p^{2\gamma+1}$ and $p$ and $m$ are sufficiently large so that
$e^{-p/2}+e^{-m}\lesssim \bigl\{\bigl(\sum_{j>m}\lambda_j\bigr)^{1/2} + \sqrt{m} / p^{\gamma}  +   \sigma\sqrt{m/q} \bigr\}/\tau\Delta$, 
under the conditions of Theorem~\ref{thm:main} 
and provided $F\sim P$, $F$  independent of all other random variables, we have
\begin{equation}\label{eq:bound_thm}
\mathbb{E}\|T_{\tau}(\hat{F}_{\mathrm{GS\text{-}FPCA}})-F\| = \mathcal{O} \Bigl(
\bigl(\textstyle\sum_{j>m}\lambda_j\bigr)^{1/2}  + \sqrt{m} / p^{\gamma} +  \sigma\sqrt{m/q} \Bigr),
\end{equation}
where we used $\mathbb{E}\|F-\mu\|\leq \bigl(\int_{D}K(u,u)\D u \bigr)^{1/2} =
 \bigl(\sum_{j\in\mathbb{N}}\lambda_j \bigr)^{1/2} $, which we regard as a constant, and also $\mathbb{E}\| Q_{\mathcal{E}_m^{\perp}}  (F-\mu)  \|\leq\bigl(\sum_{j>m}\lambda_j \bigr)^{1/2} $, which holds due to Jensen's inequality and \eqref{eq:KLexp} and corresponds to the optimal expression in \eqref{eq:KLopt}. In particular, if probability measure $P$ is strictly low-rank, then there exists $m_0$ such that for all $m\geq m_0$, $\sum_{j>m}\lambda_j$ is zero. 
However, it is enough for the eigenvalues $\{\lambda_j\}_{j\in\mathbb{N}}$ to decrease relatively quickly, for this term to become sufficiently small. 

It is now instructive to compare the  rate of estimation in \eqref{eq:bound_thm} with  the rate of estimation of the GS-reconstruction from \eqref{GSorder} that has order $1/ p^{\gamma}  + \sigma\sqrt{p/q}$ provided $\cos\angle(\mathcal{G}_p,\mathcal{F}_q)$ is bounded away from zero. Note that if there exists $m_0$ such that for all $m\geq m_0$, $\sum_{j>m}\lambda_j \lesssim m / p^{2\gamma} + \sigma^2m/q$, then  the  GS-FPCA rate of estimation is of order  $\sqrt{m} / p^{\gamma}  + \sigma\sqrt{m/q}$, provided $\cos\angle(\mathcal{E}_m,\mathcal{F}_q)$ is bounded away from zero. Remarkably, in the noiseless case when   $\sigma=0$, for a fixed $m\geq m_0$ and increasing $p$, the resolution of the  GS-FPCA reconstruction increases as $p^{\gamma}$, only at the cost of increasing  the number of training observations $n=n(p)$, since the number of measurements $q$ does not exhibit dependence on $p$ (for sufficiently large $p,n$ and $q$). 
In contrast, for  the GS-reconstruction to achieve the same resolution we need to increase the number of measurements $q=q(p)$ so that $\cos\angle(\mathcal{G}_p,\mathcal{F}_q)$ remains bounded away from zero. Moreover, in noisy case when $\sigma>0$, for GS reconstruction, $q$ needs to increase with respect to $\sigma^2 p$, while for GS-FPCA, it needs to increase with respect to $\sigma^2 m$, which may present a considerable improvement in case of $p \gg m$. 

It is also instructive to compare the GS and GS-FPCA reconstructions from a computational-complexity point of view. The computational complexity of GS, that is, the computational complexity of solving system \eqref{eq:GSsystem}, is of order $qp$, whereas the computational complexity of deploying GS-FPCA, that is, the computational complexity of solving system \eqref{eq:GSFPCAsystem}, is of order $qm$, which is less or equal to that of GS since $p>m$. In the Fourier-wavelet case, due to fast Fourier and wavelet transform algorithms, the complexity order of GS can be reduced to $q\log p$ \cite{Gataric2016}, which is still slower than GS-FPCA if $\log p \gtrsim  m$.

The asymptotic bound \eqref{eq:bound_thm} is further illustrated by numerical examples of Section~\ref{sub:1D}, where a low rank 1D model is used with $m$ such that $\mathbb{E}\| Q_{\mathcal{E}_m^{\perp}}  (F-\mu)  \| = 0 $ and $q$ such that $\cos\angle(\mathcal{E}_m,\mathcal{F}_q)$ is bounded away from zero, thus satisfying the conditions required for \eqref{eq:bound_thm} to hold. Moreover, in Section~\ref{sub:2D}, we illustrate the performance of the proposed reconstruction using a more realistic 2D model, where in Figure~\ref{fig:sing_val_phantom} we vary $m$ for a fixed $q$ and show that it is possible to choose $m$ such that $\mathbb{E}\| Q_{\mathcal{E}_m^{\perp}}  (F-\mu)  \|$ is small and such that $\cos\angle(\mathcal{E}_m,\mathcal{F}_q)$ is bounded away from zero, as required by Theorem~\ref{thm:main} for the high-resolution rate of approximation.


\subsection{GS-FPCA with sparse principal components}\label{sub:spca}

Under the assumption that the functional principal components are sparse with respect to the reconstruction basis, one can use sparse PCA instead of classical PCA to estimate the PCs in \eqref{eq:recspace} and thus reduce the required size of the training set $n$. The sparsity assumption is commonly leveraged when reconstructing a signal of interest, as it is know that natural images are sparse with respect to wavelets.
Within the GS-FPCA framework, the sparsity assumption on FPCs implies that $\{\phi^p_j\}_{j=1}^m$ are sparse with respect to $\{\varphi_{\ell}\}_{\ell\in\mathbb{N}}$ for a sufficiently large $p$. Thus, it is reasonable to assume that only $k<p$ entries of $e^p_j\in\mathbb{C}^p$ are different than zero, in which case, we can use sparse PCA to compute $\{\hat{e}_j^p\}_{j=1}^m$ by constraining the optimizer in \eqref{eq:eigenvec} to be sparse. In particular, writing $\mathrm{nnzr}(v)$ for  the number of non-zero rows of a vector $v\in\mathbb{C}^p$, sparse PCA computes the first PC by solving
$$
\hat{e}_1^p := \argmax_{\{v:\mathrm{nnzr}(v)\leq k, \|v\|_2=1\}} \bar{v}\hat\Sigma_{Y} v,
$$
whereas higher-order PCs can be computed via a modified deflation scheme or by maximizing the trace of $V^*\hat\Sigma_{Y} V$  over orthonormal matrices $V$ such that $\mathrm{nnzr}(V)\leq k$, see e.g.~\cite{Gataric2020}.  

There are many existing algorithms for computing sparse principal components, see for example~\cite{Zou2006, dAspremont2007, Ma2013, Gataric2020}, and also, 
statistical and computational properties of sparse PCA are quite well understood due to the work by~\cite{Johnstone2009,Vu2013,WBS2016} and others. In particular, due to these results, we know that by using sparse PCA we can readily reduce the term $\sqrt{mp/n}$ in \eqref{PCAorder} to $\sqrt{mk\log{p}/n}$, and therefore reduce the number of observations $n$ required by Theorem~\ref{thm:main}. In Section \ref{sec:sim}, we examine both classical and sparse PCA when computing the GS-FPCA reconstruction in our numerical simulations and indeed observe a regularization effect due to sparse PCA in a high-dimensional setting when $n$ is small compared to $p$.

\subsection{GS-FPCA with $\ell_2$-regularization}\label{sub:rr}

For an improved performance in a noisy setting, one may want to add $\ell_2$-regularization the least-squares estimation of the coefficients $\{\alpha_j\}_{j=1}^m$ in \eqref{eq:hatalpha}. 
From the KL expansion of $Q_{\mathcal{G}_p}F$, we know that $(\langle F-\mu_p,\phi_1^p \rangle, \ldots, \langle F-\mu_p,\phi_m^p \rangle)^{\top}$ has mean zero and covariance $\Lambda_m:=\mathrm{diag}({\lambda}_1^p,\ldots,{\lambda}_m^p)$, and thus, if $P$ is Gaussian, it is reasonable to impose prior distribution $ N_m(0,\hat{\Lambda}_m)$ on these coefficients and use the corresponding MAP estimator instead of the ML estimator. This leads to a ridge regression problem where a weighted $\ell_2$-regularization term is added to the least-squares objective function, so that instead of \eqref{eq:hatalpha} we have
$$
\{\hat\alpha_j\}_{j=1}^{m} := \argmin_{\{\alpha_j\}_{j=1}^m\in\mathbb{C}^m} \sum_{k=1}^q \bigl| \langle F,\psi_k \rangle +W_k -\langle \hat\mu_p,\psi_k \rangle -  \sum_{j=1}^m \alpha_j  \langle\hat\phi^p_j,\psi_k\rangle \bigr|^2
+ \lambda\sum_{j=1}^m (\hat{\lambda}_j^p)^{-1}|\alpha_j|^2,
$$ 
for some regularization parameter $\lambda>0$. 

Theoretical analysis of such regularization procedure would require a different approach to the one taken in this paper, however, due to classical results on ridge regression and Tikhonov regularization, see e.g.~\cite{Hsu2012b,Schonlieb2019},  
in this case  we expect a more robust estimation for a smaller $q$ relative to $m$. In particular, we expect a relaxed version of the condition with respect to $\cos\angle(\mathcal{E}_m,\mathcal{F}_q)$, since
the minimal singular value  of the regularized system matrix  is equal to the square root of $\lambda_m(\hat A_{m,q}^*\hat A_{m,q}+ \lambda\mathrm{diag}(\hat{\lambda}^p_1,\ldots\hat{\lambda}^p_m)^{-1})$,  which is lower-bounded by the square root of $\lambda_m(\hat A_{m,q}^*\hat A_{m,q})+\lambda/\hat{\lambda}^p_1$, due to Weyl's inequality \cite{Weyl1912}. However, this would come
at the price of a lower estimation rate that includes the order of $\sqrt{\lambda}$ even in the noiseless case where $\sigma=0$.  In our numerical results below, we also include such regularized estimation procedure, which in a noisy setting can further improve reconstruction performance.  

\section{Numerical simulations}\label{sec:sim}

\subsection{Examples with one-dimensional generative model}\label{sub:1D}

For numerical examples in this subsection, we simulate data  using the following generative model: 
\begin{equation}\label{sim_model}
f_i(u)=\sum_{j=1}^{m_0}\sqrt{\lambda_j} \xi_{ij} \phi_j(u),  \quad u\in D:=[0,1], \quad i=1,\ldots,n,
\end{equation}
where $\xi_{ij}$ are i.i.d.~standard normal random factors in $\mathbb{R}$, $\lambda_j:=m_0-j+1$ and  $\{\phi_j\}_{j=1}^{m_0}$ are FPCs with each $\phi_j(u)$ constructed as a linear combination of exponentials $\exp(-(u-u_0)^2/s_0)$ for various choices of $s_0$ and $u_0\in D$. Such $\{\phi_j\}_{j=1}^{m_0}$, for $m_0=10$, are shown in  Figure~\ref{fig:components} and several  $f_i$'s generated from this model are shown in Figure~\ref{fig:fi}. 

\begin{figure}[htb]
 \centering
 \includegraphics[scale=0.95]{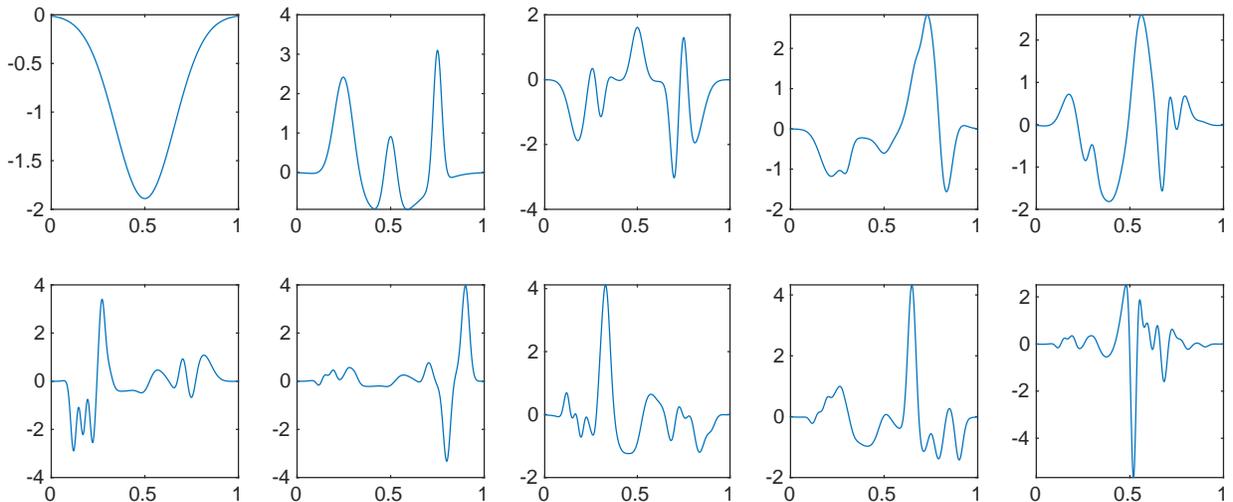}
 \caption{Functional principal components $\{\phi_j\}_{j=1}^{10}$ (ordered from left to right, top to bottom).}
 \label{fig:components}
\end{figure}

To construct our training set \eqref{eq:observations_coef}, we choose $\mathcal{G}_p$ as the span of the first $p$ elements of the boundary-corrected Daubechies wavelets of order $s$, introduced in \cite{CDV1993}, which constitute an orthonormal basis $\{\varphi_{\ell}\}_{\ell\in\mathbb{N}}$ in $\mathcal{L}([0,1];\mathbb{R})$, and compute the noisy high-resolution training observations as
\begin{equation}\label{wavelet_coef}
y_{i}:=\bigl( \langle f_i,\varphi_{1} \rangle, \ldots, \langle f_i,\varphi_{p} \rangle \bigr)^{\top}+z_i,\quad  i=1,\ldots,n,
\end{equation}
where each variable of the noise vector $z_i\in\mathbb{R}^p$ is generated from normal distribution $N(0,\tilde\sigma^2)$. For computation of appropriate wavelet functions we used Wavelab\footnote{Available at \href{http://www-stat.stanford.edu/~wavelab/}{www-stat.stanford.edu/$\sim$wavelab/}.} as well as the Matlab files from the Supplementary material of \cite{Gataric2016} for handling 2D boundary corrected wavelets and orders $s>3$. 
Finally, we simulate measurements \eqref{eq:observations_Fourier} by generating a new unseen observation $f$ from model \eqref{sim_model}, and computing its $q$ noisy Fourier samples as
\begin{equation}\label{fourier_coef}
\langle f, \psi_k \rangle + w_k := \int_0^1 f(u)\exp\bigl\{-\mathrm{i} 2\pi \bigl(k-\lfloor q/2\rfloor \bigr) u\bigr\}\D u + w_k, \quad k=1,\ldots,q,
\end{equation}
where noise $w_k\in\mathbb{C}$ is such that both $\mathrm{Re}(w_k)$ and $\mathrm{Im}(w_k)$ are from  $N(0,\sigma^2/2)$. It is important to note that $q/2$ is therefore the highest measured frequency. We also note that, when computing an infinite-dimensional inner-product, we discretize $D$ with increments $\Delta u$ so that $1/\Delta u \gg p$.

\begin{figure}[htb]
 \centering
 \includegraphics[scale=0.95]{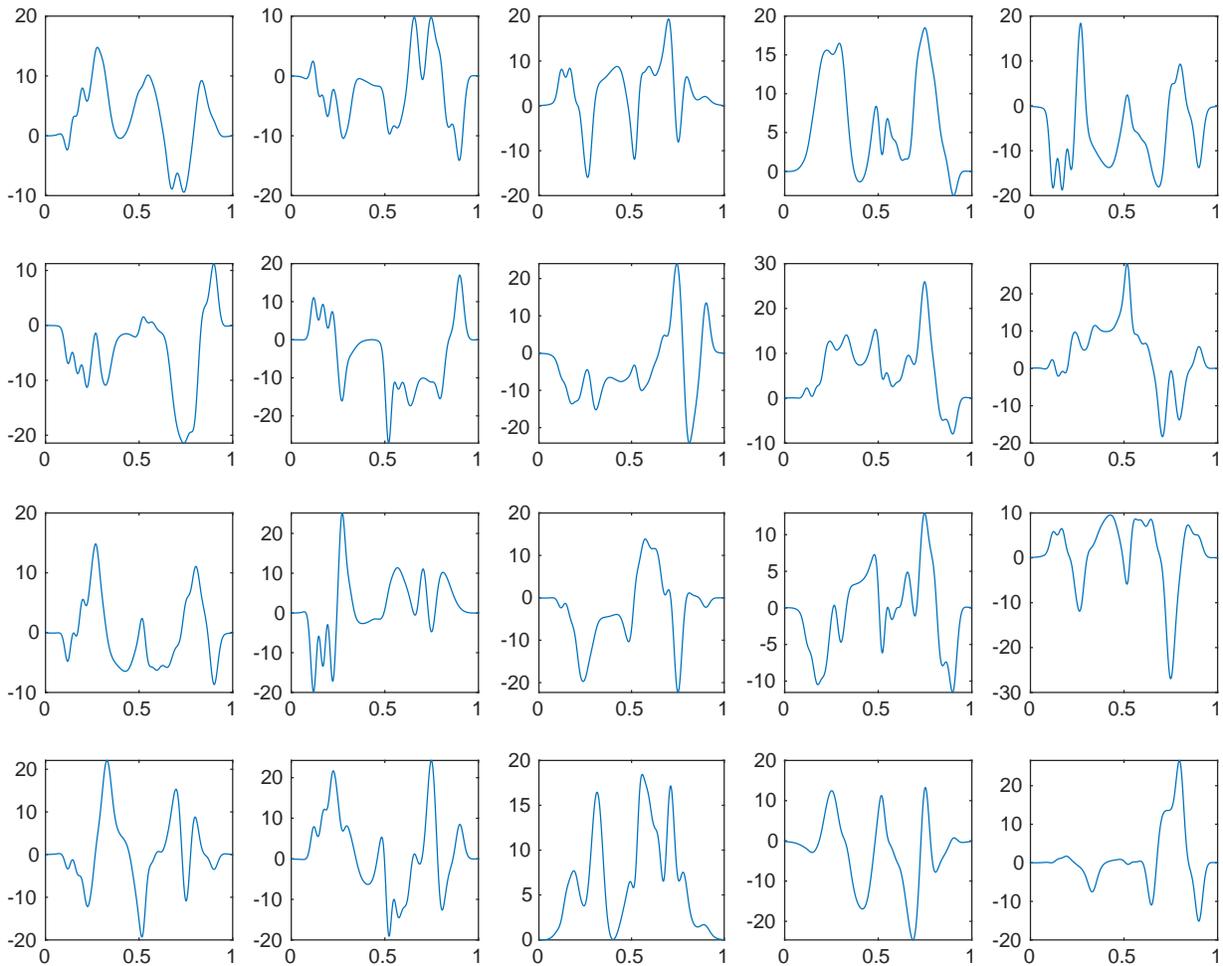}
 \caption{Several $f_i$'s from model \eqref{sim_model} with the principal components shown in Figure \ref{fig:components}. Such $f_i$'s are used to generate (noisy) training set \eqref{wavelet_coef} for the experiments in Figures \ref{fig:diff_wave}--\ref{fig:noiseless}.}
 \label{fig:fi}
\end{figure}

In the examples of this subsection, we assume that we have correctly specified $m$ so that $\hat{\mathcal{E}}_m^p$ has the dimension corresponding to the true rank $m_0$ of the model \eqref{sim_model} used to generate the data, and thus $\|Q_{\mathcal{E}_m^{\perp}}f\|=0$, for any $m\geq m_0$, which makes it  possible to better understand different terms in bound  \eqref{eq:bound_thm} that depend on problem parameters $q,n$ and $p$. Indeed, by inspecting the eigenvalues of $\hat\Sigma_Y$, it is very easy to correctly specify the true rank $m_0$ in this example even with very low SNR, so we leave the consideration of choosing appropriate $m$ for the next subsection.

In Figure~\ref{fig:diff_wave}, we first demonstrate  that using solely the low-frequency Fourier measurements \eqref{fourier_coef}, indeed it is impossible to accurately reconstruct ground truth $f$ directly in the high-resolution space $\mathcal{G}_p$  by the (regularized) GS reconstruction, whereas, if also accounting for the training observations \eqref{wavelet_coef}, then by using the GS-FPCA reconstruction as proposed in this paper it becomes possible to accurately reconstruct $f$ with high-resolution in $\mathcal{G}_p$.
Specifically, in this example, we use $\mathcal{G}_p$ with $p=128$ wavelets of order $s\in\{1,4,8\}$ and a relatively small number $q=12$ of Fourier measurements with noise $\sigma=0.02\sqrt{2}$. To compute $\hat{\mathcal{E}}^p_m$, we use $n=p$ training observations with noise $\tilde{\sigma}=0.01$.  
In Figure~\ref{fig:diff_wave}, in orange, black and green, we show $\hat f_{\mathrm{GS}}\in\mathcal{G}_p$ defined in \eqref{eq:GS}, whose coefficients are estimated either by the plain least-squares or with $\ell_2$ or $\ell_1$-regularization term $\lambda \sum_j^p|a_j|^2$ or $\lambda \sum_j^p|a_j|$, with $\lambda=0.04$; while in cyan and blue, we show  $\hat{f}_{\mathrm{GS}\text{-}\mathrm{FPCA}}\in\hat{\mathcal{E}}_m^p\subseteq \mathcal{G}_p$ defined in \eqref{eq:GSFPCA}, whose coefficients are computed either by the plain least-squares or its regularized version with additional term $\lambda\sum_{j=1}^m (\hat{\lambda}_j^p)^{-1}|\alpha_j|^2$ and parameter $\lambda=0.08$, as described in Section~\ref{sub:rr}.

\begin{figure}
 \centering
\begin{tabular}{ccc}
 $s=1$ & $s=4$ & $s=8$ \\
 \includegraphics[scale=0.74]{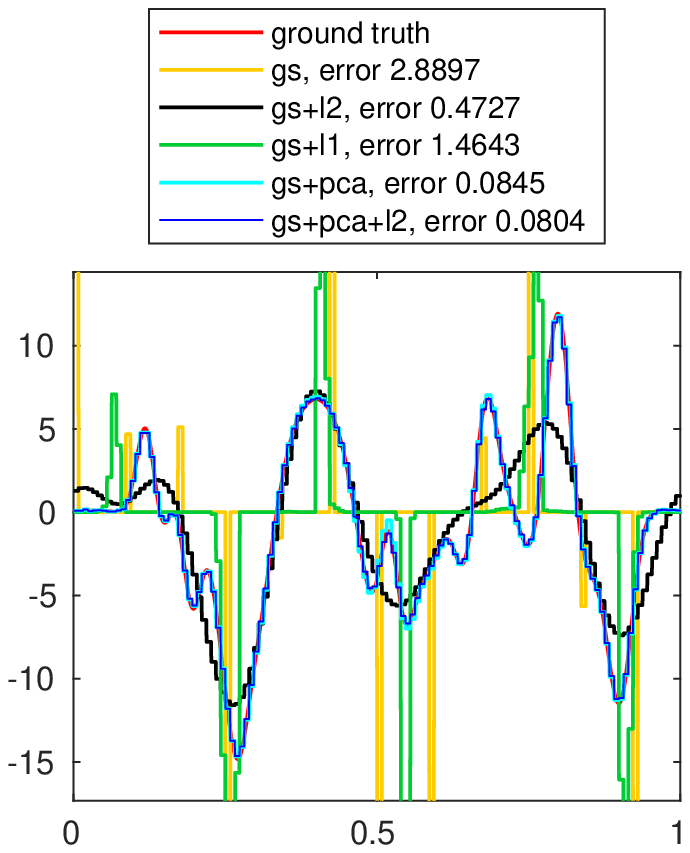} &
 \includegraphics[scale=0.74]{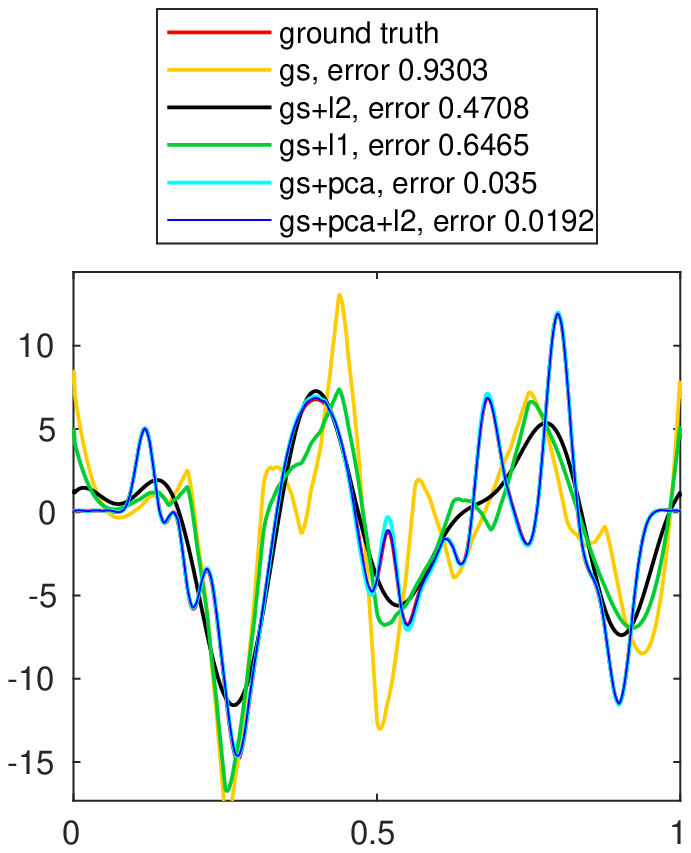} &
\includegraphics[scale=0.74]{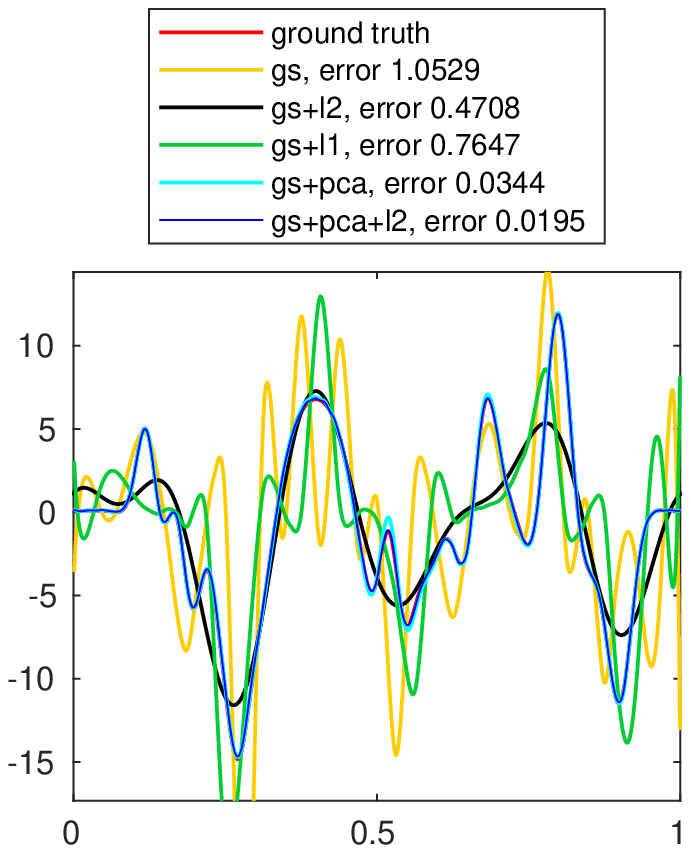}
\end{tabular}
 \caption{Reconstruction from $q=12$ noisy Fourier measurements \eqref{fourier_coef} and a training set of noisy wavelet coefficients  \eqref{wavelet_coef} with $n=p=128$. $\mathcal{G}_p$ consists of boundary-corrected Daubechies wavelets with $s\in\{1,4,8\}$ vanishing moments. 
We also report  relative reconstruction error $\|f-\hat{f}\|/\|f\|$.}
 \label{fig:diff_wave}
\end{figure}

Next, in Figure~\ref{fig:increasing_qnp} we inspect how the average relative error $\|f-\hat{f}\|/\|f\|$  behaves when varying problem parameters $q$, $n$ and $p$, in the noisy setting with $\sigma=0.02\sqrt{2}$ (SNR around 60 on average) and $\tilde{\sigma}=0.01$ (SNR around 40) and with Daubechies wavelets of order $s=4$. The average is computed over 30 repetitions of the experiment so that we reconstruct 30 different unseen $f$'s generated using the model in \eqref{sim_model}, while reconstruction is performed either by GS or GS-FPCA, where principal components are computed either by classical PCA or sparse PCA as discussed in Section~\ref{sub:spca}.  From the top-left panel of Figure~\ref{fig:increasing_qnp} we see that when (sparse) PCA is used to construct the reconstruction space, the error is on the order of the noise already for relatively small $q\geq 12$. We also note that in this noisy case when $q<12$, adding the $\ell_2$-regularization, as described in Section~\ref{sub:rr}, is helpful in increasing accuracy. On the other hand, much larger $q$ is needed to attain the same accuracy by using other variants of GS without the training set. From the top-right panel of Figure~\ref{fig:increasing_qnp}, we see that the desired accuracy is achieved already with $n\geq p/16$ in this example, and that for relatively small $n$ the accuracy is improved by using sparse PCA instead of classical PCA. From the bottom-left panel of Figure~\ref{fig:increasing_qnp} we see that by increasing $p$ and $n=2p$ we are indeed improving the resolution of our reconstruction (up to the order of the noise), even when $q$ fixed, confirming the conclusion of our theoretical results. Finally, in the bottom-right panel of  Figure~\ref{fig:increasing_qnp}, we vary the level of noise $\sigma^2$, so that SNR increases from around 4 to 240 on average, thus confirming that the error is a linear factor of noise when $\sigma \sqrt(m/q)$ is its driving term.

\begin{figure}
\centering
\begin{tabular}{cc}
$p=512$, $n=2p$ & $p=512$, $q=12$  \\
\includegraphics[scale=0.65]{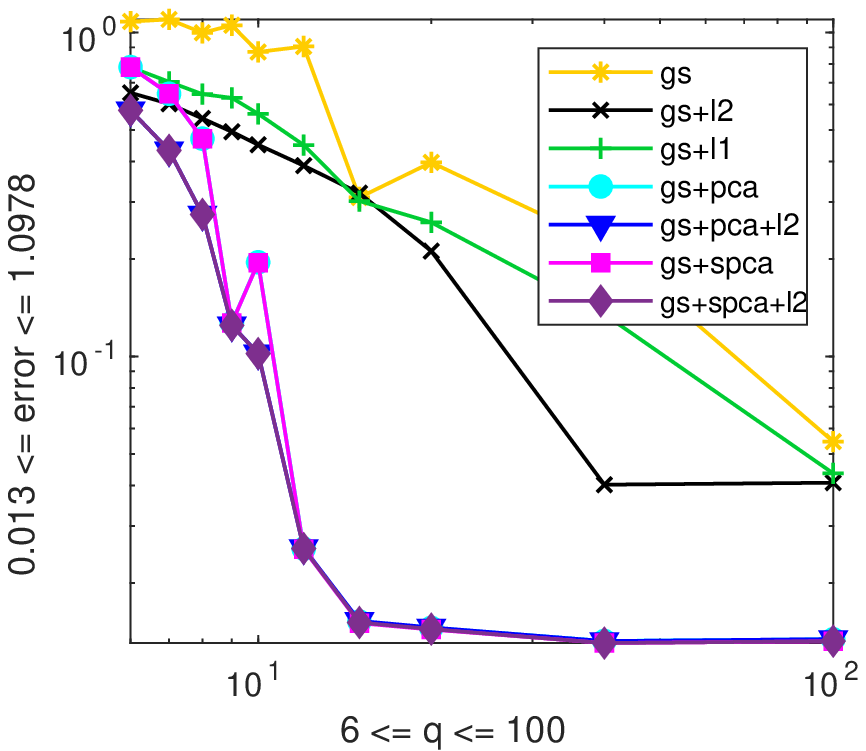} &
\includegraphics[scale=0.65]{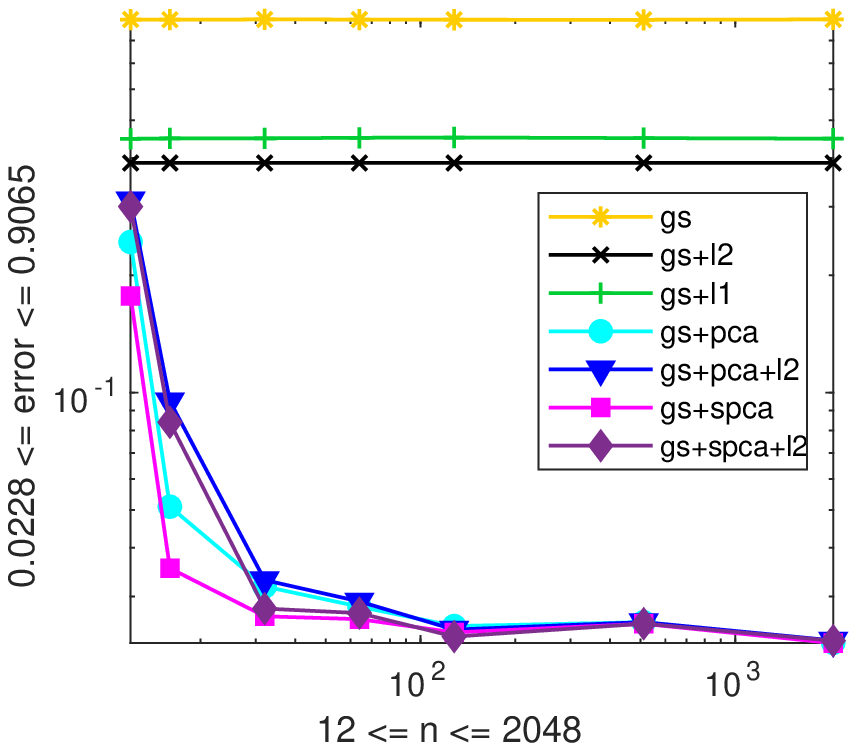} \\
$n=2p$, $q=12$ & $p=512$, $n=2p$, $q=12$ \\
\includegraphics[scale=0.65]{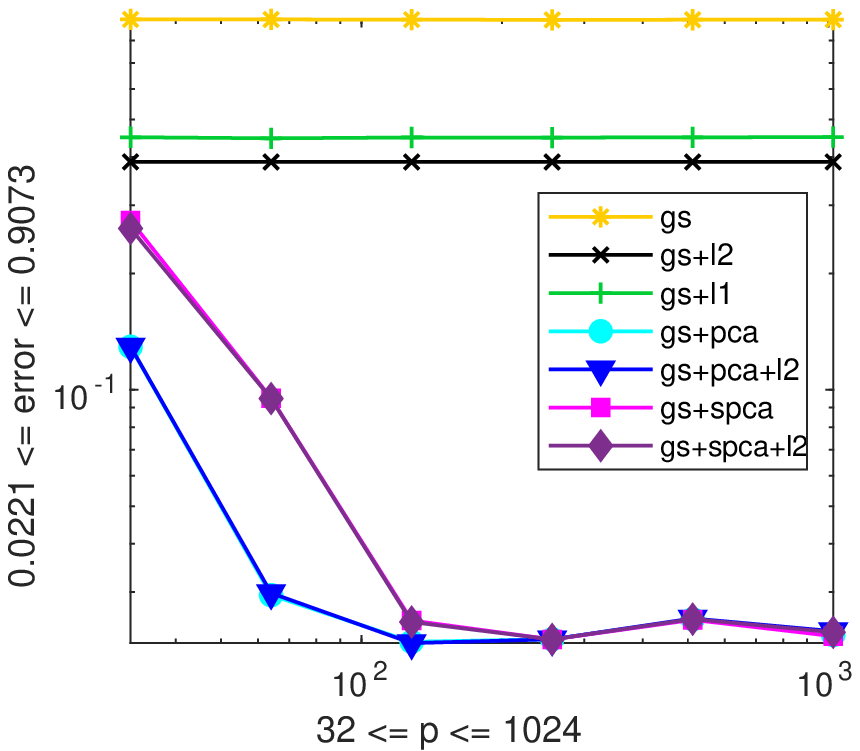} &
\includegraphics[scale=0.65]{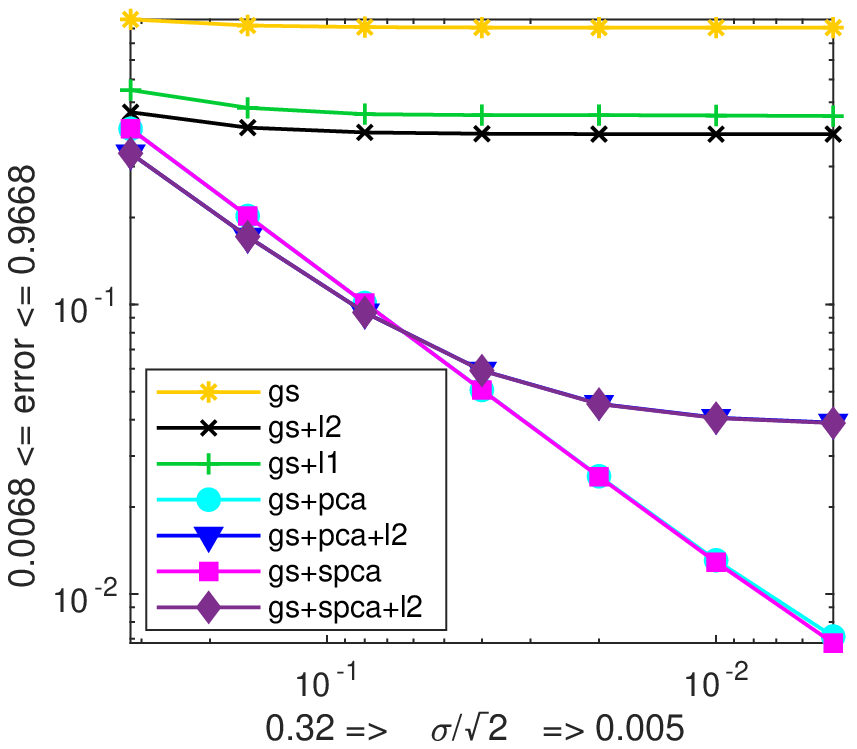} 
\end{tabular}
\caption{Average relative error $\|f-\hat{f}\|/\|f\|$ for varying $q$ (top-left), $n$ (top-right), $p$ (bottom-left) and $\sigma$ (bottom-right), in the noisy case where $\sigma=0.02\sqrt{2}$ (when fixed) and $\tilde{\sigma}=0.01$. An average over 30 repetitions is computed using different reconstruction methods  (different colors/markers). 
}
\label{fig:increasing_qnp}
\end{figure}

To further examine our theoretical results, we present in Figure~\ref{fig:noiseless} the noiseless case where we take $\sigma=\tilde\sigma=0$ and $n=2p$, and use different wavelet subspaces with varying number of vanishing moments $s\in\{1,2,4\}$. As depicted by our bound \eqref{eq:bound_thm} derived from Theorem \ref{thm:main}, in Figure~\ref{fig:noiseless}, we see that we can indeed attain the approximation rate associated to the $p$-dimensional space $\mathcal{G}_p$, which in the case of wavelets with $s$ moments corresponds to $p^{-\gamma}$, $\gamma <s$,  provided $f$ is $\gamma$-H\"{o}lder continuous. 
In fact, via GS-FPCA framework, we can attain such rate with relatively small $q$, while much larger $q$ is required when reconstructing directly in $\mathcal{G}_p$  via GS. 

\begin{figure}
\centering
\begin{tabular}{ccc}
 & $q=12$ & \hspace{-2.5cm} $p=512$ \\
 \hspace{1cm} & \includegraphics[scale=0.65]{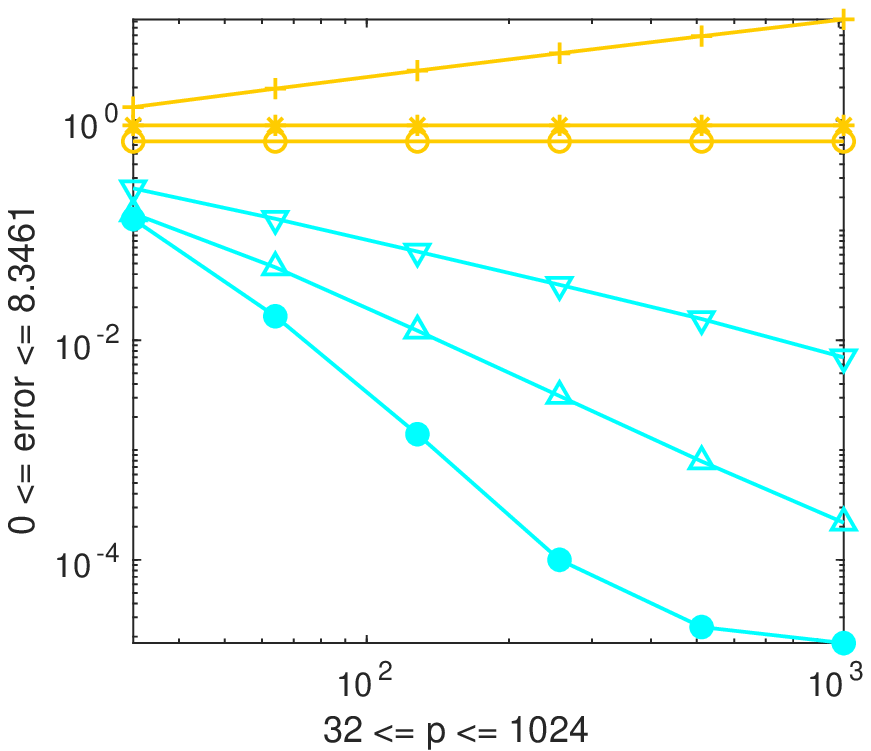} &
\includegraphics[scale=0.65]{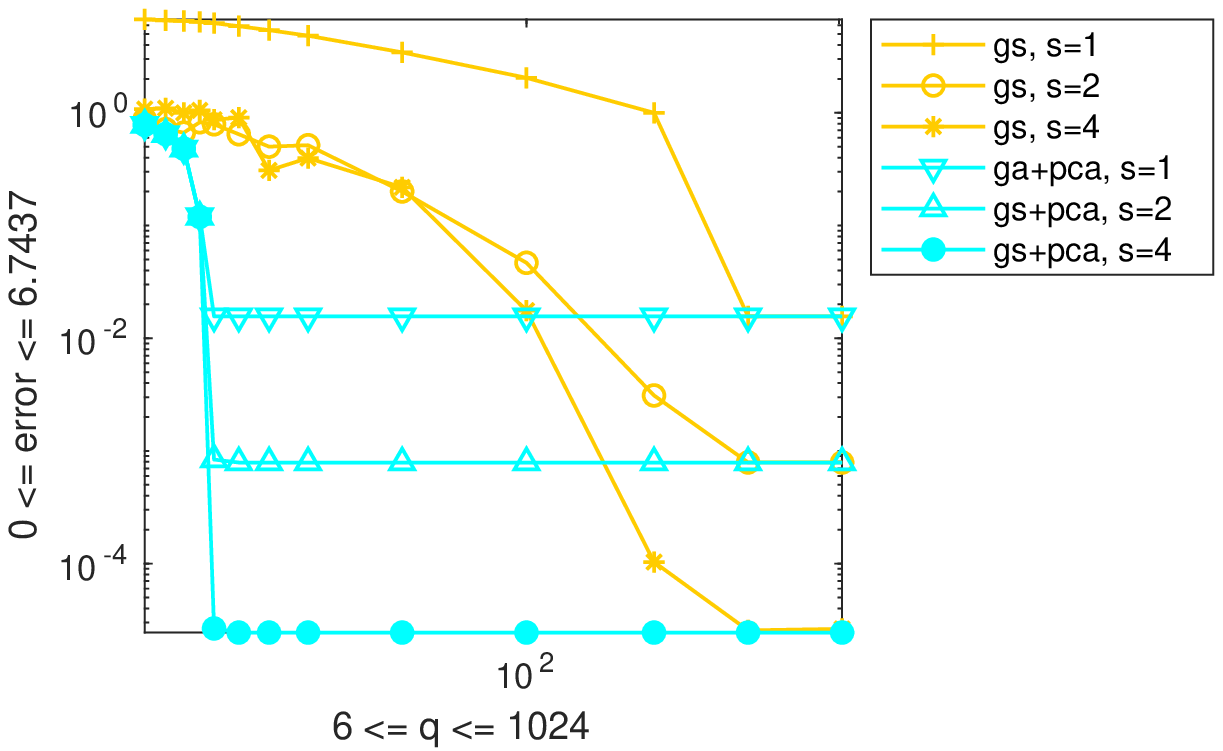}
\end{tabular}
\caption{The relative reconstruction error using different wavelet subspaces $\mathcal{G}_p$ with $s\in\{1,2,4\}$ vanishing moments in combination with either GS (orange) or GS-FPCA (cyan) for increasing $p$ and fixed $q$ (left) or for increasing $q$ and fixed $p$ (right) in the noiseless case when $n=2p$.}
\label{fig:noiseless}
\end{figure}

\subsection{Examples with two-dimensional Shepp--Logan phantom}\label{sub:2D}

In the following examples, we use 2D images of a Shepp--Logan (SL) phantom, which can be generated up to an arbitrary resolution $1/\Delta u$ by using Matlab's function `phantom($E$,$1/\sqrt{\Delta u}$)', where each row of matrix $E\in\mathbb{R}^{10\times6}$ specifies an ellipse in the image using $6$ different parameters and $\sqrt{\Delta u}\times \sqrt{\Delta u}$ specifies the discretization of the 2D domain $D:=[0,1]\times[0,1]$. Crucially, we choose $\Delta u$ so that $1/\Delta u\gg p, q$ and so that we can simulate an infinite-dimensional measurement model. In particular $1/\Delta u = 256^2$, $p=64^2$ and $q=32^2$ in all the examples of this subsection. 
Specifically, the measurements \eqref{eq:observations_Fourier} of an unseen phantom $f$ are computed by approximating the Fourier coefficients 
\begin{equation}\label{eq:2dFourier_coef}
\langle f, \psi_{k,j}\rangle:=\int_0^1\int_0^1 f(u,v)\exp\bigl\{-\mathrm{i} 2\pi [ (k-\lfloor\sqrt{q}/2\rfloor) u + (j-\lfloor\sqrt{q}/2\rfloor)]\bigr\} \D u \D v,
\end{equation}
with respect to the 2D Fourier basis yielding the span of $\mathcal{F}_q:=\mathrm{span}\{\psi_{k,j}, k, j=1,\ldots,\lfloor\sqrt{q}\rfloor\}$. In addition, we perturb both the real and imaginary part of the Fourier coefficients $b$ with the noise vector $w$ from $N_q(0,(0.0002)^2 I_q)$  so that the SNR measured as $\|b\|_2 /\|w\|_2$ is around $36$.

The training set \eqref{eq:observations_coef} is obtained by first generating $n=512$ phantoms $\{f_i\}_{i=1}^n$, where
each $f_i$ 
is computed by randomly perturbing  matrix $E_0$   used to compute the Matlab's default phantom, which can be retrieved in Matlab by executing~`$[\sim,E_0]$ = phantom()'.
Next, we compute  $y_i^{(\ell)}:=x_i^{(\ell)}+z_i^{(\ell)}$, where $p=4096$, $\ell=1,\ldots,p$,  $x_i^{(\ell)}:=\langle f_i, \varphi_{\ell}\rangle$ are the coefficients of $f_i$ with respect to the 2D boundary-corrected wavelets and noise $z_i^{(\ell)}$ is generated from zero-mean Gaussian with $\tilde{\sigma}=0.0001$, so that SNR measured as $\|x_i\|_2/\|z_i\|_2$ is around $36$ on average. In Figure \ref{fig:train_phantom} we show several such  training observations by displaying $\sum_{\ell=1}^p y_i^{(\ell)} \varphi_{\ell}(u)$, $u\in D$.

\begin{figure}
\centering
\includegraphics[scale=0.95,trim={2.2cm 1.5cm 1.5cm 1cm},clip]{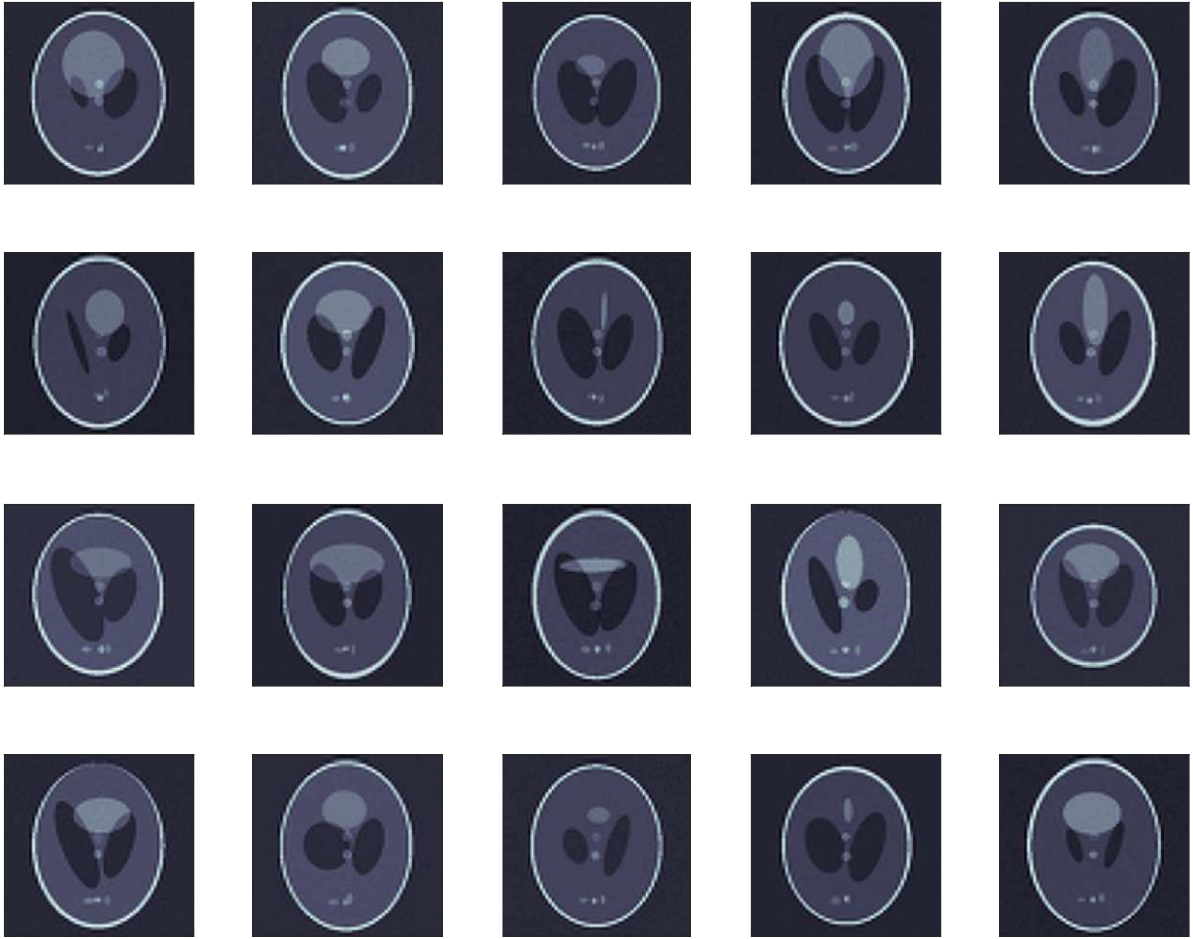}
\caption{Some observations from the training set in the SL phantom example.}
\label{fig:train_phantom}
\end{figure}

In  Figure~\ref{fig:sing_val_phantom}, we first inspect a suitable choice of $m$ in this example.
Specifically, in the left panel of Figure~\ref{fig:sing_val_phantom}, we compute the explained variance as $\sum_{j=1}^m\hat{\lambda}^p_j/\sum_{j=1}^{n}\hat{\lambda}^p_j$ for different choices of $m$ where $\hat{\lambda}^p_j$ are the eigenvalues of the covariance matrix corresponding to the observations $y_1,\ldots,y_n$, which are computed either by the classical PCA or its sparse variant. From such plot we see that our observations have a relatively low-rank structure, and in particular, already for $m=230$ the explained variance is over $0.99$.
In the right panel of Figure~\ref{fig:sing_val_phantom}, we compute the minimal singular value $\sigma_{\min}$ of the (regularized) system matrix $\hat{A}_{m,q}$ from \eqref{eq:GSFPCAsystem} in order to choose $m$ so that we have $\cos(\mathcal{F}_q,\hat{\mathcal{E}}^p_m)>0$, as suggested by Theorem \ref{thm:main}. Specifically, if least-squares is used to solve \eqref{eq:GSFPCAsystem}, we compute $\cos(\mathcal{F}_q,\hat{\mathcal{E}}^p_m)=\sigma_{\min}(\hat A_{m,q})=\lambda_m(\hat A_{m,q}^*\hat A_{m,q})^{1/2}$, while if ridge regression is used instead, as explained in Section~\ref{sub:rr}, we compute the minimal singular value of a regularized version of $\hat A_{m,q}$, i.e.~the square root of $\lambda_m(\hat A_{m,q}^*\hat A_{m,q}+ \lambda\mathrm{diag}(\hat{\lambda}^p_1,\ldots\hat{\lambda}^p_m)^{-1})$.
We see that  $\sigma_{\min}(\hat{A}_{m,q})$ approaches $\cos(\mathcal{F}_q,\mathcal{G}_p)=\sigma_{\min}(A_{p,q})=0$ as $m$ approaches $p$, where $A_{p,q}$ is the system matrix from \eqref{eq:GSsystem}, but crucially, for the choices of $m\leq500$ we have $\sigma_{\min}(\hat{A}_{m,q})>0.02$ in this example.  Interestingly, we see that for the intermediate choices of $m$, sparse PCA  provides certain regularization  since $\sigma_{\min}(\hat{A}_{m,q})$ is larger when sparse PCA is used in place of classical PCA to compute $\hat{\mathcal{E}}^p_m$.

\begin{figure}
\centering
\includegraphics[scale=0.65]{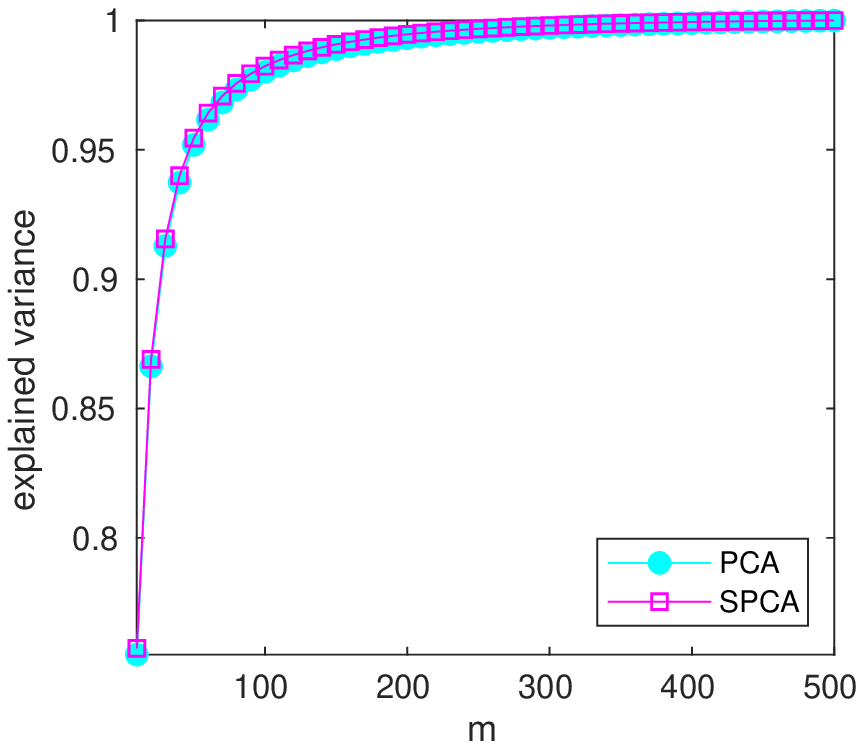} \includegraphics[scale=0.65]{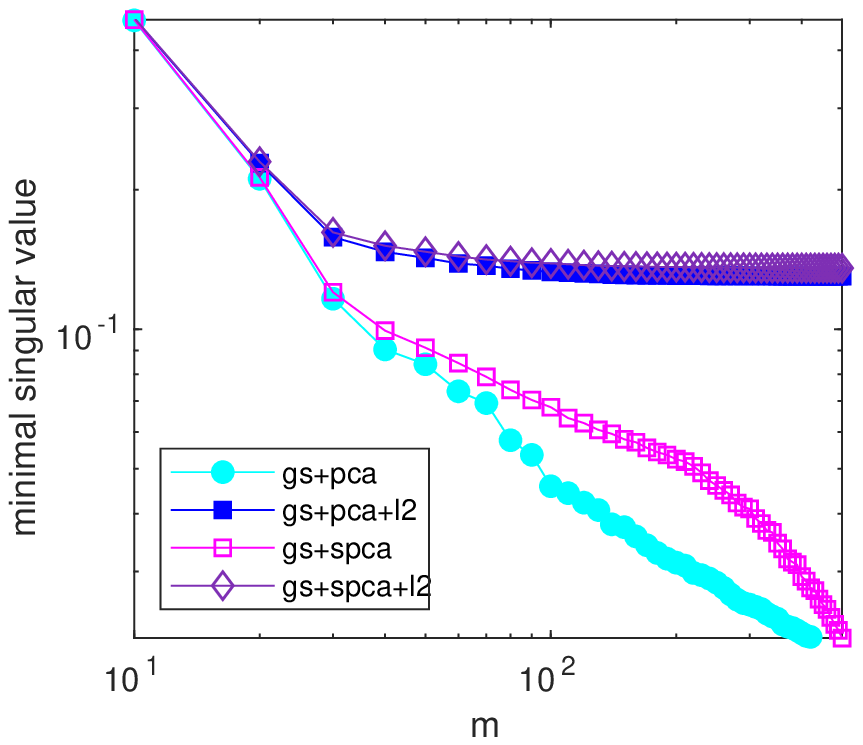}
\caption{Explained variance and minimal singular value of the (regularized) system matrix  from \eqref{eq:GSFPCAsystem} for different choices of $m\in\{10,20\ldots,500\}$ in the SL phantom example, where $p=4096$, $q=1024$, $n=512$ and (sparse) PCA is used to estimate $\hat{\mathcal{E}}^p_m$. Regularization parameter is $\lambda=0.0015$
.}
\label{fig:sing_val_phantom}
\end{figure}

Next, in Figure~\ref{fig:rec_phantom} we reconstruct the unseen phantom $f$ shown in the left panel of Figure~\ref{fig:orig_phantom} from its noisy $q=1024$ Fourier coefficients \eqref{eq:2dFourier_coef}. The desired resolution is the one corresponding to its $p$-dimensional wavelet projection shown in the right panel of Figure~\ref{fig:orig_phantom}, where $p=4096$ and wavelets are of order $s=4$. From the top panels of Figure~\ref{fig:rec_phantom}, we observe that without using the training observations, it is impossible to accurately reconstruct the phantom in the required wavelet resolution from given low-resolution Fourier measurements by GS (with either plain least-squares or its $\ell_1$ or $\ell_2$-regularizations). This is because $\sigma_{\min}(A_{p,q})=0$ for such choices of $p$ and $q$. However, if we compute $m=230$ eigenvectors from our $n=512$ training observations and reconstruct $f$ by $\hat{f}_{\mathrm{GS}\text{-}\mathrm{FPCA}}\in\hat{\mathcal{E}}^p_m\subseteq\mathcal{G}_p$, we can obtain much better reconstruction as shown in the bottom panels of Figure~\ref{fig:rec_phantom}. In particular, we see that an improved reconstruction  can be obtained when using sparse PCA instead of classical PCA to compute $\hat{\mathcal{E}}^p_m$ and when adding the $\ell_2$-regularization to the least-squares objective when computing the coefficients of  $\hat{f}_{\mathrm{GS}\text{-}\mathrm{FPCA}}$. 

\begin{figure}
\centering
\begin{tabular}{cc}
ground truth & projection onto $\mathcal{G}_p$ \\
\includegraphics[scale=0.85,trim={0.5cm 0.4cm 0.5cm 0.4cm},clip]{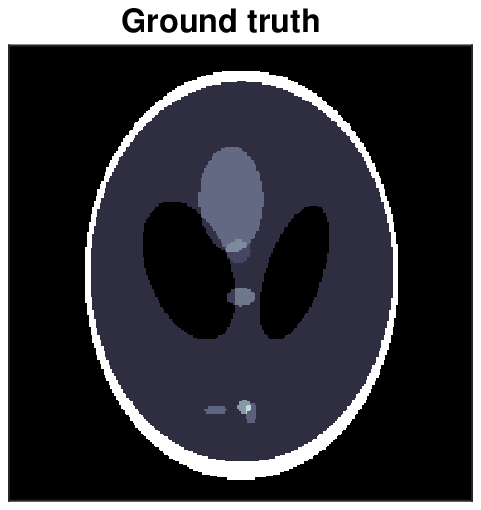} &
\includegraphics[scale=0.85,trim={0.5cm 0.4cm 0.5cm 0.4cm},clip]{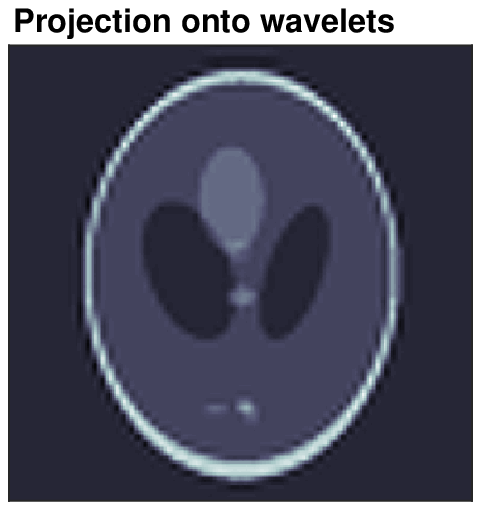}
\end{tabular}
\caption{The ground truth (left) of an unseen SL phantom and its projection (right) onto the space spanned by $p=4096$ DB4 wavelets, which is being reconstructed in Figures~\ref{fig:rec_phantom} and \ref{fig:rec_phantom_dif_sampling}.}
\label{fig:orig_phantom}
\end{figure}

\begin{figure}
\centering
\begin{tabular}{ccc}
GS & GS+$\ell_1$ & GS+$\ell_2$ \\
\includegraphics[scale=0.85,trim={0.5cm 0.4cm 0.5cm 0.4cm},clip]{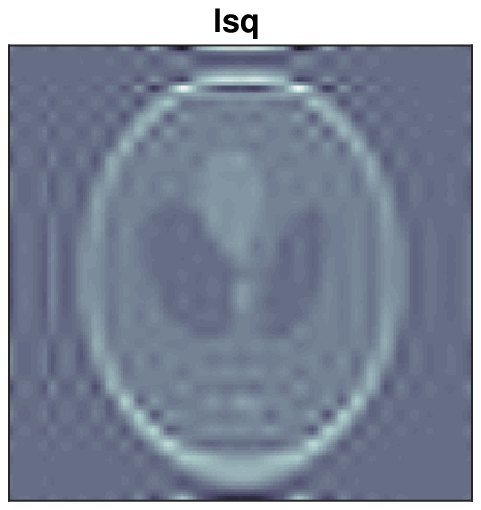} &
\includegraphics[scale=0.85,trim={0.5cm 0.4cm 0.5cm 0.4cm},clip]{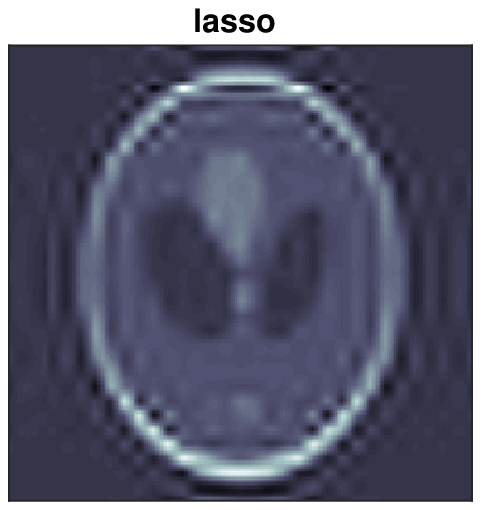} &
\includegraphics[scale=0.85,trim={0.5cm 0.4cm 0.5cm 0.4cm},clip]{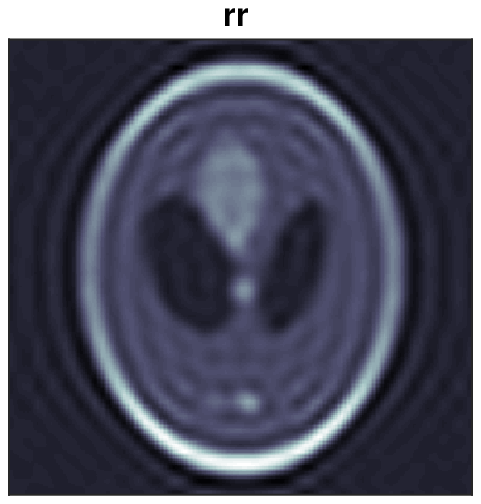} \\
GS+PCA & GS+SPCA & GS+SPCA+$\ell_2$ \\
\includegraphics[scale=0.85,trim={0.5cm 0.4cm 0.5cm 0.4cm},clip]{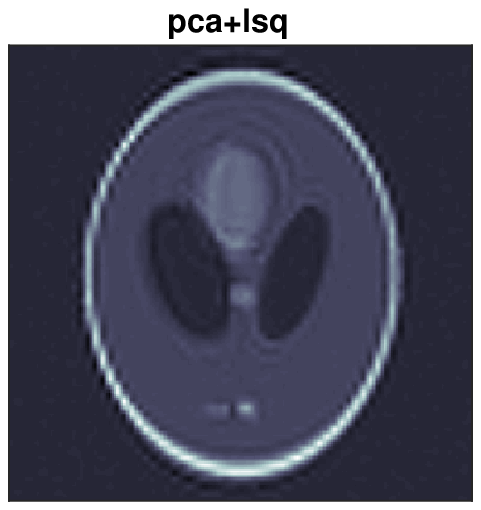} &
\includegraphics[scale=0.85,trim={0.5cm 0.4cm 0.5cm 0.4cm},clip]{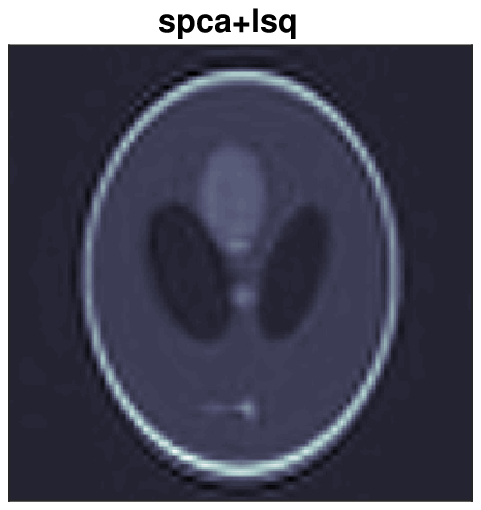} &
\includegraphics[scale=0.85,trim={0.5cm 0.4cm 0.5cm 0.4cm},clip]{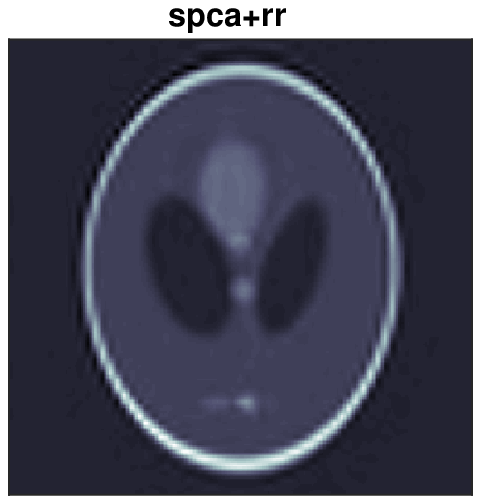} \\
\end{tabular}
\caption{Different reconstructions from $q=1024$ noisy Fourier measurements of the ground-truth SL phantom shown in Figure~\ref{fig:orig_phantom}. Top panels correspond to the (regularized) GS reconstructions computed directly in $\mathcal{G}_p$, which is spanned by $p=4096$ DB4 wavelets,  while bottom panels correspond to the (regularized) GS-FPCA reconstructions computed in $\hat{\mathcal{E}}^p_m\subseteq\mathcal{G}_p$, which is spanned by $m=230$ eigenvectors estimated using $n=512$ training observations, some of which are shown in Figure~\ref{fig:train_phantom}.}
\label{fig:rec_phantom}
\end{figure}

Finally, in Figure~\ref{fig:rec_phantom_dif_sampling} we demonstrate recovery of the same SL phantom shown in Figure~\ref{fig:orig_phantom}, but now from much smaller number of measurements $q=256$. Beside recovery from the noisy Fourier measurements (top panels), we also consider reconstructions from the noisy measurements taken with respect to a pixel basis (bottom panels), which corresponds to taking averages of $f$ over a rectangular grid. Specifically, the samples of $f$ with respect to the $q$-dimensional pixel basis are of the following form
$$
\langle f, \psi_{k,j} \rangle = \int_{0}^1 \int_{0}^1 f(u,v) \mathbb{1}_{\bigl[\frac{k-1}{\sqrt{q}},\frac{k}{\sqrt{q}}\bigr)} (u) \mathbb{1}_{\bigl[\frac{j-1}{\sqrt{q}},\frac{j}{\sqrt{q}}\bigr)} (v) \D u \D v,
$$
where $k,j=1,\ldots,\sqrt{q}$. 
From Figure~\ref{fig:rec_phantom_dif_sampling}, we see that by reconstructing in the $m$-dimensional space estimated via sparse PCA, $m=200$, GS-FPCA still produces relatively accurate reconstructions from such low resolution measurements, while GS does not stand a chance at such high resolution.

\begin{figure}
\centering
\begin{tabular}{ccc}
& GS+$\ell_2$ & GS+SPCA+$\ell_2$ \\
\rotatebox{90}{\hspace{0.1cm} from Fourier samples} & \includegraphics[scale=0.84,trim={0.5cm 0.4cm 0.5cm 0.4cm},clip]{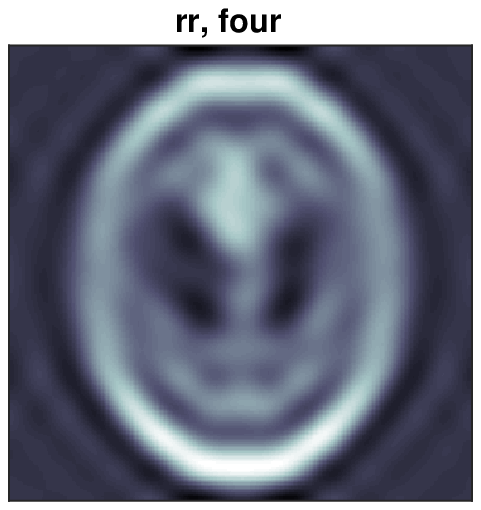} &
\includegraphics[scale=0.85,trim={0.5cm 0.4cm 0.5cm 0.4cm},clip]{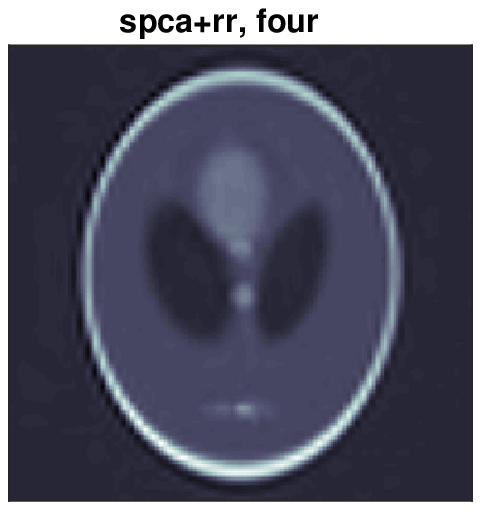} \\
\rotatebox{90}{\hspace{0.5cm} from pixel samples} & \includegraphics[scale=0.84,trim={0.5cm 0.4cm 0.5cm 0.4cm},clip]{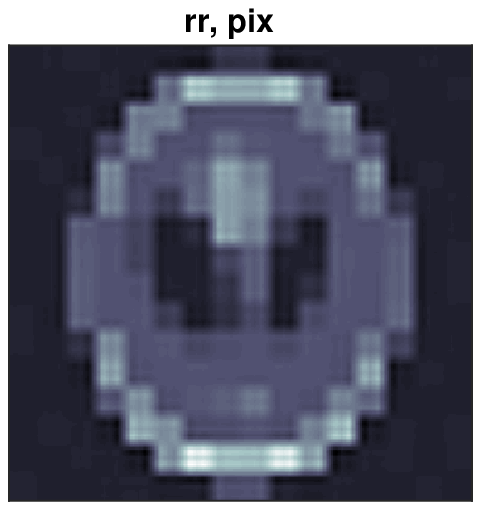} &
\includegraphics[scale=0.85,trim={0.5cm 0.4cm 0.5cm 0.4cm},clip]{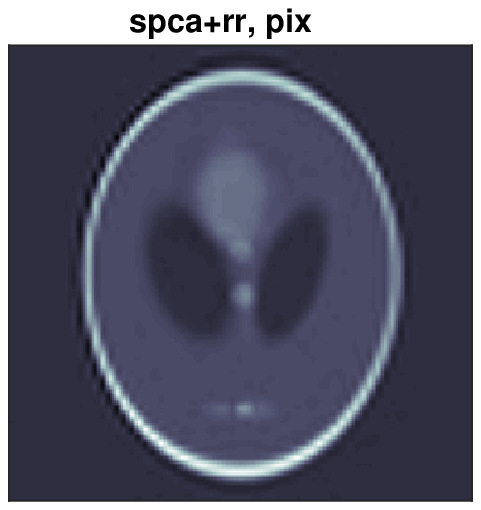} 
\end{tabular}
\caption{Different reconstructions from $q=256$ noisy Fourier (top) or pixel (bottom) measurements of the ground-truth SL phantom shown in Figure~\ref{fig:orig_phantom}. Left panels correspond to the regularized GS reconstructions in $\mathcal{G}_p$ spanned by $p=4096$ DB4 wavelets, while right panels correspond to the regularized GS-FPCA reconstructions  in $\hat{\mathcal{E}}^p_m\subseteq\mathcal{G}_p$ spanned by $m=200$ eigenvectors, which are estimated using $n=512$ training observations via sparse PCA.}
\label{fig:rec_phantom_dif_sampling}
\end{figure}

\section{Discussion and future work}\label{sec:conclusions}

In recent years, due to the development of deep neural networks (DNNs), there has been an increased interest in combining model-based and data-driven approaches for solving inverse problems. While 
promising results have been achieved 
empirically, theoretical understanding of such techniques is still largely lacking, e.g.~\cite{Schonlieb2019,Ravishankar2019}.  A particular instance of the inverse problem considered in this paper, corresponds to the inversion of a Fourier transform sampled up until a relatively low frequency $q$, which is an ill-posed problem typically studied from a model-based point of view, under the assumption that the unknown function is a sum of sparse 
spikes, e.g.~\cite{Blu2008,Candes2014}. 
In this paper, we approached such an inverse problem by invoking a training set and considering a data-driven technique based on FPCA, which is shown to be successful in high-resolution recovery provided appropriate low rank and angle conditions hold and provided the size of the training set $n$ is sufficiently large relative to the desired resolution $p$. Due to the flexibility to use sparse representations and thus sparse PCA, such procedure is particularly useful in a high-dimensional setting where $n$ is small relative to $p$. 

However,
provided $n$ is relatively large,  instead of FPCA,
it would be possible to use more expressive data-driven models based on DNNs to infer an optimal representation of the coefficients of $F$ with respect to $\mathcal{G}_p$. In particular, one could use autoencoders to learn a (nonlinear) decoding map $D:\mathbb{C}^m\mapsto\mathbb{C}^p$ and an encoding map $E:\mathbb{C}^p\mapsto\mathbb{C}^m$ such that $\sum_{i=1}^{n}\|D(E(y_i))-y_i\|_2^2$ is minimized. 
In the special case of a linear encoder and decoder with $D=E^{\top}$ and $D^{\top}D=I_m$, such procedure is equivalent to PCA, namely $D=(\hat{e}_1^p,\ldots,\hat{e}_m^p)$. Analogously to the framework considered in this paper, using (noisy Fourier) measurements $b\in\mathbb{C}^q$, one could then compute the desired coefficients as $\hat{\alpha}:=\argmin_{\alpha\in\mathbb{C}^m}\|A_{p,q}D(\alpha)-b\|_2^2$ and recover $\hat{F}:=(\varphi_1,\ldots,\varphi_p)D(\hat\alpha)$. We leave further consideration and analysis of such a non-linear method for future work.

In practice, there might be a need to reconstruct an object which only partially resembles training observations, while partially it contains structures unseen in training observations. For example, 
we might need to reconstruct a brain scan with a tumor dissimilar to anything contained in the training set of reconstructed brain scans. 
For this reason, it is important to investigate schemes for anomaly detection in the context where training observations are used for the reconstruction of an unseen object. 
Building on the framework developed in this paper, we could approach such problem 
by modeling the random field of interest as $G=F+H$, where $F\sim P$ and $H\sim Q$ and the first $m$ eigenfunctions associated to the measures $P$ and $Q$ are orthogonal. Once $G$ is estimated with respect to $\mathcal{E}_m$ from its measurements $b$, if the corresponding residual, $b-\hat{A}_{m,q}\hat{\alpha}$, is greater than the estimated level of noise, we could then either attempt to estimate $H$ in $\mathcal{G}_p$ from the residual,  or report an outlier and suggest increasing the number of measurements $q$.  We believe that by such a procedure, it would be also possible to further inform the correct specification of $m$, so that principal components greater than the noise level are not omitted from the reconstruction space.  We leave further investigation of such procedure for future work.

In this paper, we estimated FPCs from the high-resolution observations in $\mathcal{G}_p$, which could be recovered before hand from the high-resolution measurements with respect to $\mathcal{F}_r$ for a sufficiently large $r=r(p)$. However, in practice it may be more optimal to use such indirect measurements with respect to $\mathcal{F}_r$ to directly recover principal components in $\mathcal{G}_p$, which corresponds to an approach of estimating FPCs from indirect measurements recently studied in \cite{Lila2019}.

Finally, we mention that in this paper we assumed sampling with respect to a Riesz basis, which is an important generalization of an orthonormal basis in that it allows for more flexible sampling scenarios when measurements are acquired with respect to  a non-orthonormal basis. However, similarly as in \cite{AGH2014}, 
we believe that this  could be further relaxed by allowing the sampling system to constitute a frame, which would thus allow for nonuniform sampling patterns in the Fourier domain.


\appendix

\section{Proofs of theoretical results}
\label{app:proofs}

\begin{proof}[Proof of Lemma~\ref{lem:eig_mean_est}]
Observe that
$\sin\angle(\mathcal{E}_m,\hat{\mathcal{E}}^p_m)\leq \sin\angle(\mathcal{E}_m,\mathcal{E}^p_m) + \sin\angle(\mathcal{E}^p_m,\hat{\mathcal{E}}^p_m),$
where $\mathcal{E}^p_m:=\text{span}\{\phi_1^p,\ldots,\phi_m^p\}$ and $\{(\phi_j^p,\lambda_j^p)\}_{j=1}^p$ are the eigenfunction-eigenvalue pairs of the covariance operator $K_p$ associated to the random variable $Q_{\mathcal{G}_p}F$. Recall that $e^p_j:=(\langle \phi_j^p ,\varphi_1 \rangle,\ldots, \langle \phi_j^p ,\varphi_p \rangle)^{\top}$ is an eigenvector of $\Sigma_X$ with eigenvalue $\lambda_j^p$. 
%
%
To upper-bound $\sin\angle(\mathcal{E}_m,\mathcal{E}^p_m)$, first note that
for any $k,j=1,\ldots,p$, we have
\begin{align*}
\lambda_j^p \langle \phi_j^p ,\varphi_k \rangle 
&= \sum_{\ell=1}^p \Sigma_X^{(k\ell)}\langle \phi_j^p,\varphi_{\ell} \rangle 
= \int_{D}\int_{D} K(u,v) \overline{\varphi_k(u)} \phi_j^p(v) \D u \D v,
\end{align*}
where we used  $\lambda^p_j e^p_j=\Sigma_X e^p_j$ and $
\Sigma_X^{(k,\ell)}=\mathbb{E} [ \langle F-\mu, \varphi_k \rangle \overline{ \langle F-\mu, \varphi_{\ell}} \rangle]
= \sum_{j\in\mathbb{N}}\lambda_j \langle \phi_{j}, \varphi_k \rangle \overline{\langle \phi_{j}, \varphi_{\ell} \rangle} = \int_{D}\int_{D} K(u,v) \overline{\varphi_k(u)}\varphi_{\ell}(v) \D u \D v$, $k,l=1,\ldots,p$, as well as the fact that $\phi_j^p=\sum_{\ell=1}^p\langle \phi_j^p,\varphi_{\ell} \rangle\varphi_{\ell}$, 
respectively. Therefore
\begin{align*}
\lambda^p_j \int_{D} \phi^p_j(v) \overline{g(v)} \D v = \int_{D}\int_{D} K(u,v) \phi^p_j(u) \overline{g(v)} \D u \D v, \quad \forall g\in\mathcal{G}_p.
\end{align*}
Since also  $\lambda_j \int_{D} \phi_j(v) \overline{f(v)} \D v = \int_{D}\int_{D} K(u,v) \phi_j(u) \overline{f(v)} \D u \D v$, $\forall f\in\mathcal{L}_2(D;\mathbb{C})$, by  the approximation properties of the Galerkin method \cite{BO1987}, we have  $\lambda_j^p\leq \lambda_j$, and moreover, there exist $C$ (independent of $p$) and $p_0$ such that for any $p\geq p_0$ and $j=1,\ldots,m$, we have
\begin{align}
\label{BOresult2}
\lambda_j -\lambda_j^p &\leq C \lambda_j^2 \epsilon_{p}^2,\\
\label{BOresult1}
\|\phi_j - \phi_j^p\|  &\leq C  \epsilon_{p}.
\end{align}
Now, let  $e_j:=(\langle \phi_j ,\varphi_1 \rangle, \langle \phi_j ,\varphi_2 \rangle,\ldots)^{\top}$, $E_m := (e_1, \ldots, e_m) \in \mathbb{C}^{\infty\times m}$ and $E_m^p := (e_1^p, \ldots, e_m^p) \in \mathbb{C}^{p\times m}$. Let  $Q_p\in \mathbb{C}^{p\times \infty}$ denote the projection operator with identity $I_p$ constituting the first $p$ columns and the rest equal to zero, and let $\Theta(E_m^p,Q_pE_m)$ denote the $m\times m$ diagonal matrix whose $j$th diagonal entry is  $\arccos$ of the $j$th singular value of $(E_m^p)^{\top} Q_p E_m$. Observe that $\langle \phi_j,\phi_k^p \rangle =\sum_{\ell=1}^p \langle \phi_j,\varphi_{\ell} \rangle \langle \phi_k^p,\varphi_{\ell} \rangle = (e_k^p)^{\top} Q_p e_j$. 
Therefore,  we have  
\begin{align*}
\sin\angle(\mathcal{E}_m,{\mathcal{E}}^p_m) &= \|\sin \Theta(E_m^p,Q_p E_m)\|_{\mathrm{op}}
\leq \frac{1}{\sqrt{2}} \| E_m^p - Q_p E_m \|_{\mathrm{F}} \\
&\leq \sqrt{\frac{m}{2}} \max_{j=1,\ldots,m} \| e_j^p - Q_p e_j \|_2 
\leq \sqrt{\frac{m}{2}} \max_{j=1,\ldots,m} \| \phi^p_j - \phi_j \| 
\leq C \sqrt{\frac{m}{2}} \epsilon_{p},
\end{align*}
where in the last inequality we used \eqref{BOresult1}. To conclude  part $(a)$ of the proof, it remains to upper-bound $\sin\angle(\mathcal{E}^p_m,\hat{\mathcal{E}}^p_m)$. Similarly as above, let  $\hat E_m^p := (\hat e_1^p, \ldots, \hat e_m^p) \in \mathbb{C}^{p\times m}$, and let $\Theta(\hat E_m^p,E_m^p)$ denote the $m\times m$ diagonal matrix whose $j$th diagonal entry is  $\arccos$ of the $j$th singular value of $(\hat E_m^p)^{\top} E_m^p$.
Since $\langle\phi^p_j,\hat\phi^p_k\rangle = (\hat{e}^p_k)^{\top}e^p_j$, we have 
$\sin\angle({\mathcal{E}}^p_m,\hat{\mathcal{E}}^p_m)
= \|\sin \Theta(\hat E_m^p,E_m^p)\|_{\mathrm{op}}$, and since $\Sigma_X e^p_j=\lambda^p_j e^p_j$, we have $\Sigma_Ye_j^p=(\lambda_j^p+\tilde\sigma^2)e_j^p$. Thus, due to Davis--Kahan Theorem \cite{DK1970} and Weyl's inequality \cite{Weyl1912}, provided $\| \hat\Sigma_Y - \Sigma_Y \|_{\mathrm{op}}<(\lambda_m^p-\lambda_{m+1}^p)/2$ holds, we have 
\begin{equation*}
\sin\angle({\mathcal{E}}^p_m,\hat{\mathcal{E}}^p_m)
\leq  \frac{\sqrt{m}}{|\lambda_m^p+\tilde\sigma^2-\lambda_{m+1}(\hat\Sigma_Y)|} \| \hat\Sigma_Y - \Sigma_Y \|_{\mathrm{op}}
\leq  \frac{2\sqrt{m}}{\lambda_m^p-\lambda_{m+1}^p} \| \hat\Sigma_Y - \Sigma_Y \|_{\mathrm{op}}.
\end{equation*}
Due to result by \cite{koltchinskii2017b}, there exists $\tilde C$ so that for any $\delta\geq \exp(-n)$, the inequality
$$
\| \hat\Sigma_Y - \Sigma_Y \|_{\mathrm{op}} \leq \tilde{C} (\lambda_1^p+\tilde\sigma^2)(\sqrt{p/n}+\sqrt{\log(1/\delta)/n})
$$
holds with probability at least $1-\delta$, and thus, if $2\tilde{C} (\lambda_1^p+\tilde\sigma^2)(\sqrt{p/n}+\sqrt{\log(1/\delta)/n}) <\lambda_m^p-\lambda_{m+1}^p$, then 
\begin{align*}
\sin\angle({\mathcal{E}}^p_m,\hat{\mathcal{E}}^p_m) \leq 2 \tilde{C}\sqrt{m}(\lambda_m^p-\lambda_{m+1}^p)^{-1} (\lambda_1^p+\tilde\sigma^2)(\sqrt{p/n}+\sqrt{\log(1/\delta)/n}) 
\end{align*}
holds with probability at least $1-2\delta$. Moreover, due to \eqref{BOresult2} and the fact that $\lambda^p_{j}\leq\lambda_{j}$, we have $\lambda_m^p-\lambda_{m+1}^p \geq \lambda_m-\lambda_{m+1} - C \lambda_m^2 \epsilon_{p}^2$ and $ \lambda^p_1+\tilde\sigma^2 \leq  \lambda_1+\tilde\sigma^2$, so the result $(a)$ follows.
For part $(b)$, since $\mu_p=Q_{\mathcal{G}_p}\mu = (\varphi_1,\ldots,\varphi_p) \mu_X= (\varphi_1,\ldots,\varphi_p)\mu_Y$, we have 
\begin{align*}
 \|\mu -\hat\mu_p \| 
&\leq \|\mu - \mu_p \| + \|\mu_p - \hat\mu_p \| = \|Q_{\mathcal{G}_p^{\perp}}\mu \| + \|\hat{\mu}_{Y}-\mu_{Y}\|_2 \\
&\leq \|Q_{\mathcal{G}_p^{\perp}}\mu \| + \sqrt{n^{-1}\text{Tr}(\Sigma_X+\tilde\sigma^2 I_p)} + \sqrt{2n^{-1}(\lambda_1^p+\tilde\sigma^2)\log(1/\delta') },
\end{align*}
with probability at least $1-\delta'$, where in the last inequality we used the result by \cite{Joly2017}. The final result then follows by using that $\lambda^p_{1}\leq\lambda_{1}$. 
\end{proof}

\begin{proof}[Proof of Theorem~\ref{thm:main}]
First observe that for any $f\in\mathcal{L}_2(D;\mathbb{C})$ we have 
\begin{align*}
\|Q_{\mathcal{F}_q^{\perp}} f\|  
\leq  \|Q_{\mathcal{F}_q^{\perp}}  Q_{\mathcal{E}_m} f \| + \|Q_{\mathcal{F}_q^{\perp}} ( f- Q_{\mathcal{E}_m} f ) \|
\leq \|f\| \sup_{\substack{\{g\in\mathcal{E}_m: \|g\|=1\}}}  \| Q_{\mathcal{F}_q^{\perp}} g \| + \|Q_{\mathcal{E}_m^{\perp}} f \|.
\end{align*} 
Since $\cos\angle(\hat{\mathcal{E}}^p_m,\mathcal{F}_q)\geq1 - \sup_{\substack{\{f\in\hat{\mathcal{E}}^p_m: \|f\|=1\}}} \|Q_{\mathcal{F}_q^{\perp}} f\| $, by using the above inequality we get
\begin{align}\label{eq:sins}
\cos\angle(\hat{\mathcal{E}}^p_m,\mathcal{F}_q) 
&\geq 1 - \!\sup_{\substack{\{f\in\mathcal{E}_m: \|f\|=1\}}}\! \|Q_{\mathcal{F}_q^{\perp}} f\| - \!\sup_{\substack{\{f\in\hat{\mathcal{E}}^p_m: \|f\|=1\}}}\! \|Q_{\mathcal{E}_m^{\perp}} f\| \nonumber \\
&= 1 - \sin\angle(\mathcal{E}_m,\mathcal{F}_q) - \sin\angle(\mathcal{E}_m,\hat{\mathcal{E}}_m^p).
\end{align}
Define the event $\Omega:=\{\sin\angle(\mathcal{E}_m,\hat{\mathcal{E}}_m^p)\leq \tilde\epsilon_{mpn\delta}\}$, which due to Lemma~\ref{lem:eig_mean_est}$(a)$ is the event of probability at least $1-\delta$. 
Due to \R{eq:sins} and since $\sin\angle(\mathcal{E}_m,\mathcal{F}_q)<1-\tilde\epsilon_{mpn\delta}$, on $\Omega$ we have 
$
\cos\angle(\hat{\mathcal{E}}^p_m,\mathcal{F}_q) > 0.
$ 
 Now define $\tilde{F}_0= \sum_{j=1}^m \tilde\alpha_j \hat \phi_j^p$ such that
\begin{equation}\label{eq:tildealpha}
\{\tilde\alpha_j\}_{j=1}^{m} := \argmin_{\{\alpha_j\}_{j=1}^m\in\mathbb{R}^m} \sum_{k=1}^q \bigl| \langle F-\hat\mu_p,\psi_k \rangle -  \sum_{j=1}^m \alpha_j  \langle\hat\phi^p_j,\psi_k\rangle \bigr|^2.
\end{equation}
On $\Omega$, by the GS result \eqref{eq:GSbound} and bound \eqref{eq:sins},  we have
\begin{equation}\label{eq:gs_ineq}
 \|\tilde{F}_0 - (F-\hat\mu_p) \| 
 \leq \frac{\|Q_{\hat{\mathcal{E}}_m^p}(F -\hat\mu_p) - (F-\hat\mu_p)\|}{1 - \sin\angle(\mathcal{E}_m,\mathcal{F}_q) - \tilde\epsilon_{mpn\delta}} .
\end{equation}
Observe that
 \begin{align}\label{eq:approx_term}
 \|Q_{\hat{\mathcal{E}}_m^{p\perp}}(F -\hat\mu_p) \|
 &\leq  \|Q_{\hat{\mathcal{E}}_m^{p\perp}}(F -\mu) \| + \|Q_{\hat{\mathcal{E}}_m^{p\perp}}(\mu -\hat\mu_p) \|  \nonumber\\
 &\leq \|Q_{\hat{\mathcal{E}}_m^{p\perp}}Q_{\mathcal{E}_m}  (F -\mu)\| + \| Q_{\hat{\mathcal{E}}_m^{p\perp}} Q_{\mathcal{E}_m^{\perp}}  (F -\mu)  \| + \|\mu - \hat\mu_p \| \nonumber\\
 &\leq  \sin\angle(\mathcal{E}_m,\hat{\mathcal{E}}_m^p)\|F-\mu\| + \| Q_{\mathcal{E}_m^{\perp}}  (F-\mu)  \| +  \|\mu - \hat\mu_p \|.
\end{align}
Define the event $\Omega'=\{\|\mu -\hat\mu_p\|_2\leq  \bar\epsilon_{np\delta'} \}$, which due to Lemma \ref{lem:eig_mean_est}$(b)$ happens with probability $1-\delta'$.
Then, due to \R{eq:gs_ineq} and \R{eq:approx_term}, on $\Omega\cap\Omega'$ we have
\begin{equation}\label{eq:F0_bound}
 \|\tilde{F}_0 - (F-\hat\mu_p) \| 
 \leq \frac{  \tilde\epsilon_{mpn\delta}\|F-\mu\| + \| Q_{\mathcal{E}_m^{\perp}}  (F-\mu)  \| +\bar\epsilon_{np\delta'} }{1 - \sin\angle(\mathcal{E}_m,\mathcal{F}_q) - \tilde\epsilon_{mpn\delta}}.
\end{equation}
Finally, define 
$\Omega'':=\bigl\{ \|\hat \alpha - \tilde\alpha\|_2 \leq \sec\angle(\hat{\mathcal{E}}^p_m,\mathcal{F}_q) \sigma \sqrt{(2m+2\log(1/\delta'')) /(qr_1)} \bigr\}$, where vector $\tilde\alpha=(\tilde\alpha_1,\ldots,\tilde\alpha_m)^{\top}$ is defined as in \eqref{eq:tildealpha} and $\hat\alpha=(\hat\alpha_1,\ldots,\hat\alpha_m)^{\top}$ is as in \eqref{eq:hatalpha}.
On $\Omega\cap\Omega'$, the probability of $\Omega''$ conditional on $F_1,\ldots,F_n,Z_1,\ldots,Z_n$ (so that we are in the setting of a fixed design matrix) is at least $1 - \delta''$, due to the result from \cite{Hsu2012}. Also, since \R{eq:F0_bound} and 
\begin{align*}
\|\hat{F}-F\|  \leq \|\hat \alpha - \tilde\alpha\|_2 + \|\tilde{F}_0 - (F - \hat\mu_p) \|,
\end{align*}
the required bound holds on $\Omega\cap\Omega'\cap\Omega''$, which has the probability at least $1 - \delta - \delta' - \delta''$ because $\mathbb{P}(\Omega \cap \Omega' \cap \Omega'') \geq 1- \mathbb{P}((\Omega \cap \Omega')^{\mathrm{c}}) - \mathbb{E}\{\mathbb{P}(\Omega''^{\mathrm{c}}| F_1,\ldots,F_n,Z_1,\ldots,Z_n ) \mathbb{1}_{\Omega\cap\Omega'} \}$. 
\end{proof}

\section*{Acknowledgments} The author would like to thank Ben Adcock, Clarice Poon, Alberto Gil Ramos, Richard Samworth and Carola-Bibiane Sch\"onlieb for useful discussions and comments.

\section*{Declarations}

\subsection*{Funding}

The author was supported by an EPSRC grant  EP/N014588/1 for the Centre for Mathematical and Statistical Analysis of Multimodal Clinical Imaging.

\subsection*{Conflicts of interests}

The author declares that there is no conflict of interest.

\bibliographystyle{apalike}
\bibliography{references}

%
%
%
%
%
%
%
%
%

\end{document}